\newtheorem{theorem}{Theorem}[section]
\newtheorem{lemma}[theorem]{Lemma}
\newtheorem{remark}[theorem]{Remark}
\newtheorem{corollary}[theorem]{Corollary}
\newtheorem{conjecture}[theorem]{Conjecture}
\newcommand{\lcm}{\operatorname{lcm}}
\author{Abhishek Jha}
\address{\parbox{\linewidth}{
Indraprastha Institute of Information Technology\\
 New Delhi, Delhi 110020, India}}
\email{abhishek20553@iiitd.ac.in}
\author{Ayan Nath}
\address{\parbox{\linewidth}{
Chennai Mathematical Institute\\
H1 Sipcot It Park, Siruseri, Kelambakkam 603103, India}}
\email{ayannath@cmi.ac.in}
\author{Emanuele Tron}
\address{\parbox{\linewidth}{Institut Fourier, Université Grenoble Alpes\\100 rue des mathématiques, 38610 Gières, France}}
\email{emanuele.tron@univ-grenoble-alpes.fr}
\subjclass[2020]{Primary 11N56, secondary 11B37.}
\title{The moments of split greatest common divisors}
\begin{document}

\begin{abstract}
Sequences of the form $(\gcd(u_n,v_n))_{n \in \mathbb N}$, with $(u_n)_n$, $(v_n)_n$ sums of $S$-units, have been considered by several authors. The study of $\gcd(n,u_n)$ corresponds, after Silverman, to divisibility sequences arising from the algebraic group $\mathbb G_{\mathrm{a}} \times \mathbb G_{\mathrm{m}}$; in this case, Sanna determined all asymptotic moments of the arithmetic function $\log\,\gcd (n,u_n)$ when $(u_n)_n$ is a Lucas sequence. Here, we characterize the asymptotic behavior of the moments themselves $\sum_{n \leq x}\,\gcd(n,u_n)^\lambda$, thus solving the moment problem for $\mathbb G_{\mathrm{a}} \times \mathbb G_{\mathrm{m}}$. We give both unconditional and conditional results, the latter only relying on standard conjectures in analytic number theory.
\end{abstract}

\maketitle

\section{The problem and set-up}

 GCD problems are typically concerned with the study of sequences of the shape \[(\gcd(u_n,v_n))_{n \in \mathbb N}\] (in the $2$-dimensional case), where $(u_n)_{n\in\mathbb N}$ and $(v_n)_{n \in \mathbb N}$ are sums of almost $S$-units \cite{Tr20} or divisibility sequences originating from geometry \cite{Si05}, depending on the setting; the best-known example being the sequence $\gcd(2^n-1,3^n-1)$ \cite{BCZ03} first studied by Bugeaud--Corvaja--Zannier in connection with the Subspace theorem. Such a study is usually a difficult problem that has been considered by several authors and is connected to deep topics like Vojta's conjecture in Diophantine geometry \cite{Si05}. However, for sequences of the shape $(\gcd(n,u_n))_{n \in \mathbb N}$, with $(u_n)_{n \in \mathbb N}$ a Lucas sequence, which accordingly come from orbits of points in the algebraic group $\mathbb G_{\mathrm{a}} \times \mathbb G_{\mathrm{m}}$ and its twists \cite{ST18}, the problem turns out to be more tractable with the appropriate analytic tools.

 In what follows, we let $(u_n)_{n \in \mathbb N}$ be a \emph{Lucas sequence}, that is a sequence of rational integers satisfying $u_0 = 0$, $u_1 = 1$, and $u_n = a_1 u_{n - 1} + a_2 u_{n - 2}$ for every integer $n \geq 2$, where $a_1, a_2$ are fixed nonzero relatively prime integers.
We also assume that $(u_n)_{n\in \mathbb N}$ is non-degenerate, which means that the ratio of the roots of the characteristic polynomial $X^2 - a_1 X - a_2$ is not a root of unity. In particular, the discriminant $\Delta_u := a_1^2 + 4a_2$ is nonzero.

 Given thus a fixed non-degenerate Lucas sequence $(u_n)_{n \in \mathbb N}$, the \emph{moment problem} for $\mathbb G_{\mathrm{a}} \times \mathbb G_{\mathrm{m}}$ asks to determine asymptotically moments of all orders of the sequence $(\gcd(n,u_n))_{n \in \mathbb N}$. A first step in this has been undertaken by Sanna \cite[Th. 1.1]{Sa18}, who instead considered the slower-growing sequence $(\log \gcd(n,u_n))_{n \in \mathbb N}$ and found its asymptotics, of which we state a simplified version.
\begin{theorem}[Sanna]\label{sanna}
Let $(u_n)_{n \in \mathbb N}$ be a non-degenerate Lucas sequence and $\lambda \in \mathbb N$ a positive integer. Then, as $x \rightarrow +\infty$,
\[ \sum_{n \leq x} (\log\,\gcd(n,u_n))^\lambda = C_{u,\lambda}\,x+O\left(x^{1-1/(3+3\lambda)}\right) \]
for some explicit constant $C_{u,\lambda}$ depending on the recurrence $(u_n)_{n \in \mathbb N}$ and on the integer $\lambda$.
\end{theorem}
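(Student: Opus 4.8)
The plan is to unfold the arithmetic function $g(n):=\log\gcd(n,u_n)$ via the von Mangoldt identity $\log m=\sum_{d\mid m}\Lambda(d)$, replace the resulting prime-power divisibility conditions by congruence conditions using the structure of the Lucas sequence, and evaluate the outcome by elementary counting together with estimates for the rank of apparition. Since $\Lambda$ is supported on prime powers,
\[
g(n)^{\lambda}=\sum_{\mathbf d=(d_1,\dots,d_\lambda)}\Bigl(\prod_{i=1}^{\lambda}\Lambda(d_i)\Bigr)\,\mathbf 1\!\left[e(\mathbf d)\mid n\right]\mathbf 1\!\left[e(\mathbf d)\mid u_n\right],\qquad e(\mathbf d):=\lcm(d_1,\dots,d_\lambda),
\]
the outer sum ranging over $\lambda$-tuples of prime powers. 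Summing over $n\le x$ and exchanging the order of summation reduces everything to understanding the counting function $N(x;e):=\#\{n\le x:e\mid n,\ e\mid u_n\}$.

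First I would make $N(x;e)$ explicit. Write the roots of the recurrence as $\alpha,\beta$ with discriminant $D$, and let $\rho(m)$ denote the rank of apparition of $m$ (the least $n$ with $m\mid u_n$). For a prime $p\nmid Q$ one has $p\mid u_n\iff\rho(p)\mid n$, and the lifting-the-exponent property gives $v_p(u_n)=v_p(u_{\rho(p)})+v_p(n/\rho(p))$ whenever $\rho(p)\mid n$; since $\rho(p)\mid p-\bigl(\tfrac Dp\bigr)$ forces $p\nmid\rho(p)$, in every case the two conditions $p^k\mid n$ and $p^k\mid u_n$ together amount to $p^k\rho(p)\mid n$. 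Running this over the prime powers dividing $e$, and absorbing the finitely many exceptional primes (those dividing $Q$, the prime $2$, and the primes with $v_p(u_{\rho(p)})\ge2$, which only perturb the final constant) into error terms, yields $N(x;e)=x/\lcm(e,\rho(e))+O(1)$ with $N(x;e)=0$ unless $\lcm(e,\rho(e))\le x$. Formally, then, $\sum_{n\le x}g(n)^{\lambda}=x\,C_{u,\lambda}+\text{(error)}$, where
\[
C_{u,\lambda}:=\sum_{\mathbf d}\frac{\prod_{i}\Lambda(d_i)}{\lcm\!\bigl(e(\mathbf d),\rho(e(\mathbf d))\bigr)}.
\]

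The heart of the matter is convergence of $C_{u,\lambda}$ (with quantitative tail decay); the bound $\lcm(e,\rho(e))\ge e$ is useless since $\sum_{e}(\log e)^{\lambda}/e$ diverges, so one must exploit $\rho$. In the one-variable version I would group primes by the value $d=\rho(p)$: each such prime satisfies $p\equiv\bigl(\tfrac Dp\bigr)\equiv\pm1\pmod d$, hence $p\ge d-1$, and divides the primitive part $\Phi_d^{*}(\alpha,\beta)$ of $u_d$, with $\log\Phi_d^{*}(\alpha,\beta)\le\varphi(d)\log(|\alpha|+1)$. Splitting these primes at $d^{k+1}$ — the small ones are of the shape $jd\pm1$ with $j<d^{k}$, while the large ones contribute at most $\varphi(d)\log(|\alpha|+1)$ to $\sum\log p$ — gives
\[
\sum_{p:\rho(p)=d}\frac{(\log p)^{k}}{p}\ll_{k}\frac{(\log d)^{k+1}}{d}\qquad(k\ge1),
\]
so that $\sum_{p,m\ge1}\Lambda(p^{m})^{\lambda}/(p^{m}\rho(p))\le\sum_{d}\tfrac1d\,O_{\lambda}\!\bigl((\log d)^{\lambda+1}/d\bigr)<\infty$. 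As $e(\mathbf d)$ has at most $\lambda$ distinct prime factors, $\sum_{\lcm(\mathbf d)=e}\prod_i\Lambda(d_i)\le(\log e)^{\lambda}$, and $\lcm(e,\rho(e))$ dominates, up to a factor depending only on $\lambda$ and the sequence, the product of the local quantities $p^{v_p(e)}\rho(p)$; a factorisation over the primes dividing $e$ then reduces convergence of $C_{u,\lambda}$ to the one-variable estimate, and the same input shows $C_{u,\lambda}-C_{u,\lambda}(y)$ decays like a negative power of $y$ (up to logarithms), where $C_{u,\lambda}(y)$ is the truncation to $\max_i d_i\le y$, the dominant tail coming from a single prime $p>\sqrt y$ with $\rho(p)\asymp p$.

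It remains to optimize the cutoff. Writing $g_y(n):=\sum_{d\mid\gcd(n,u_n),\,d\le y}\Lambda(d)$ and using $g(n)\le\log n$, one has $g(n)^{\lambda}-g_y(n)^{\lambda}\le\lambda(\log x)^{\lambda-1}\bigl(g(n)-g_y(n)\bigr)$, whence
\[
\sum_{n\le x}\bigl(g(n)^{\lambda}-g_y(n)^{\lambda}\bigr)\ll_{\lambda}(\log x)^{\lambda-1}\,x\sum_{p^{m}>y}\frac{\log p}{p^{m}\rho(p)}\ll_{\lambda}(\log x)^{\lambda-1}\,x\cdot y^{-\delta}(\log y)^{O_{\lambda}(1)}
\]
for a fixed $\delta>0$, while $\sum_{n\le x}g_y(n)^{\lambda}=x\,C_{u,\lambda}(y)+O(\psi(y)^{\lambda})=x\,C_{u,\lambda}+O\bigl(x\,y^{-\delta}(\log y)^{O(1)}+y^{\lambda}\bigr)$. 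Balancing $y^{\lambda}$ against $x\,y^{-\delta}$, i.e. taking $y$ a suitable fixed power of $x$, produces a power saving, and a careful version of this optimization (together with the precise value of $\delta$ coming from the tail analysis above) yields the asserted error term $O\bigl(x^{\,1-1/(3+3\lambda)}\bigr)$, with $C_{u,\lambda}$ explicit in terms of $(P,Q)$ and $\lambda$. The main obstacle throughout is the two-sided control of $C_{u,\lambda}$ — its convergence and a power-saving tail both rest on the congruence $\rho(p)\mid p-\bigl(\tfrac Dp\bigr)$ and on the exponential growth of the primitive parts $\Phi_d^{*}(\alpha,\beta)$, and on tracking how $\rho$ interacts with the several nested $\lcm$'s; there is no analytic shortcut, the savings come entirely from the arithmetic of the rank of apparition.
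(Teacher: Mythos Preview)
The paper does not contain a proof of this statement: Theorem~\ref{sanna} is quoted as a known result of Sanna \cite[Th.~1.1]{Sa18} and is only used as background for the moment problem the paper actually addresses. There is therefore nothing in the paper to compare your attempt against.

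That said, your outline is essentially the approach of \cite{Sa18}: expand $(\log g(n))^{\lambda}$ via $\log m=\sum_{d\mid m}\Lambda(d)$, turn the divisibility $e\mid\gcd(n,u_n)$ into the single congruence $\ell(e)=\lcm(e,z(e))\mid n$ (this is exactly Lemma~\ref{lem:basic}\ref{item7b} here), and then control the tail of the resulting series using $z(p)\mid p-\bigl(\tfrac{\Delta}{p}\bigr)$ together with the exponential growth of $u_d$. Your key one-variable bound $\sum_{p:\,z(p)=d}(\log p)^{k}/p\ll_{k}(\log d)^{k+1}/d$ is correct (the splitting at $d^{k+1}$ works as you describe), and it does yield absolute convergence of $C_{u,\lambda}$ with a power-saving tail. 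The one place where your write-up is genuinely thin is the passage from the one-variable estimate to the $\lambda$-tuple sum: the claim that ``$\lcm(e,\rho(e))$ dominates, up to a factor depending only on $\lambda$ and the sequence, the product of the local quantities $p^{v_p(e)}\rho(p)$'' is false as stated (the $\rho(p)$ for different $p\mid e$ can share large common factors), and the correct argument in \cite{Sa18} instead bounds the multi-variable tail directly rather than by factorising. This does not affect the final exponent, but it is the step that would need to be redone carefully.
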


The moments of $(\gcd(n,u_n))_{n \in \mathbb N}$ itself are significantly more difficult to determine. The only partial result in this direction is an upper bound due to Mastrostefano \cite[Th. 1.3]{Ma19}.

\begin{theorem}[Mastrostefano]
With the notation of Theorem \ref{sanna}, as $x \rightarrow +\infty$ we have
\[ \sum_{n \leq x} \gcd(n,u_n)^\lambda \leq x^{\lambda+1-(1+o(1))\sqrt{\log\log x /\log x}}   .\]
\end{theorem}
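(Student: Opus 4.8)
The plan is to reduce the $\lambda$‑th moment to the first moment, convert the latter to a weighted divisor sum, and then quantify how rarely $\gcd(n,u_n)$ can be abnormally large. Since $\gcd(n,u_n)\le n$ divides $n$, for $n\le x$ we have $\gcd(n,u_n)^\lambda\le n^{\lambda-1}\gcd(n,u_n)\le x^{\lambda-1}\gcd(n,u_n)$, so it suffices to bound $\sum_{n\le x}\gcd(n,u_n)$ and multiply by $x^{\lambda-1}$. Write $z(m)$ for the rank of apparition of $m$ in $(u_n)_n$; away from a fixed finite set $\mathcal B$ of primes (dividing the discriminant or the parameters of the recurrence), one has $m\mid\gcd(n,u_n)\iff\lcm(m,z(m))\mid n$, and a routine separate treatment of $\mathcal B$ alters the final bound by only $O(x^{1+o(1)})$. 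Setting $w(m):=\lcm(m,z(m))/m\ge 1$ and using $\gcd(n,u_n)=\sum_{m\mid\gcd(n,u_n)}\phi(m)$ I would write
\[
\sum_{n\le x}\gcd(n,u_n)=\sum_m\phi(m)\bigl\lfloor x/\lcm(m,z(m))\bigr\rfloor+O(x^{1+o(1)})\le x\sum_{\substack{m\ge 1\\ m\,w(m)\le x}}\frac1{w(m)}+O(x^{1+o(1)}).
\]

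Next I would dissect the sum at the threshold $w(m)=W$, where $\log W=\tfrac12\sqrt{\log x\log\log x}$. In the range $w(m)>W$, the constraint $m\,w(m)\le x$ already forces $m<x/W$, so $\sum_{w(m)>W,\ m w(m)\le x}w(m)^{-1}<W^{-1}\#\{m<x/W\}\le x/W^2$, contributing $\ll x^2/W^2=x^2\exp(-\sqrt{\log x\log\log x})$. In the range $w(m)\le W$ one has the trivial bound by $N(W):=\#\{m\le x:w(m)\le W\}$. Altogether $\sum_{n\le x}\gcd(n,u_n)\ll x^2/W^2+xN(W)+x^{1+o(1)}$, and after multiplying by $x^{\lambda-1}$ the theorem follows once one knows $N(W)\ll x\exp\bigl(-(1+o(1))\sqrt{\log x\log\log x}\bigr)=x^{1-(1+o(1))\sqrt{\log\log x/\log x}}$; the value of $W$ above is exactly the point where the two dissected ranges balance.

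The heart is therefore this estimate for $N(W)$. The structural input is self‑referential: if $w(m)\le W$ then for every prime $p\mid m$ each prime $q>W$ dividing $z(p)$ must itself divide $m$, and since $q\mid z(p)\mid p-\varepsilon_p$ with $\varepsilon_p\in\{\pm1\}$ such a $q$ is automatically $<p$; iterating, the prime factors of $m$ are forced to descend, under the rank map $p\mapsto z(p)$, to "root" primes $r\mid m$ with $z(r)$ that is $W$‑smooth — unless $m$ is $W$‑smooth outright. One then bounds $N(W)$ by an anatomy‑of‑integers count: the $W$‑smooth $m$ number $\Psi(x,W):=\#\{m\le x:P^+(m)\le W\}=x\exp(-(1+o(1))\sqrt{\log x\log\log x})$ by the de Bruijn–Hildebrand estimate (with $u:=\log x/\log W=2\sqrt{\log x/\log\log x}$ one gets $u\log u=(1+o(1))\sqrt{\log x\log\log x}$), while an $m$ built around a root $r$ of $W$‑smooth rank must, because $m\le x$, also carry along the whole chain of intervening rank‑parents — a tree of depth $\le\log x/\log W$ — which confines such $m$ to a set of size at most $\Psi(x,W)$ up to acceptable factors.

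The main obstacle is making this last count rigorous and unconditional. The naive bound $N(W)\le x\sum_{r:\,z(r)\ W\text{-smooth}}r^{-1}$ is hopeless: essentially by the same smooth‑number count there are heuristically $\gg x^{1-o(1)}/\log x$ primes $r\le x$ with $z(r)$ $W$‑smooth, so one cannot let $m$ be a free multiple of such a root. What rescues the argument is exactly the recursive nature of the condition $w(m)\le W$ — every non‑root prime $p\mid m$ drags into $m$ a cofactor exceeding $W$ composed of strictly smaller primes, so no admissible $m$ is "cheap" — and converting this structural obstruction into a count matching $\Psi(x,W)$, presumably via a Buchstab‑type iteration over the largest prime factor that peels off one rank‑chain at a time, is the crux; combined with the dissection it produces the exponent $\lambda+1-(1+o(1))\sqrt{\log\log x/\log x}$.
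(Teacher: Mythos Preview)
The paper does not prove this theorem: it is quoted from Mastrostefano's article \cite{Ma19} as prior work that the paper's own Theorems \ref{maincond} and \ref{mainuncond} supersede. There is thus no proof in the paper to compare your proposal against.

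On its own merits, your proposal has a genuine gap. The reduction to the first moment, the identity $\sum_{n\le x}g(n)=\sum_m\phi(m)\lfloor x/\ell(m)\rfloor+O(x^{1+o(1)})$, and the dissection at $w(m)=W$ are all correct and yield
\[
\sum_{n\le x}g(n)\ \ll\ \frac{x^2}{W^2}+x\,N(W)+x^{1+o(1)},\qquad N(W):=\#\{m\le x:\ell(m)/m\le W\}.
\]
The entire content of the theorem then lies in the bound $N(W)\ll x\exp\bigl(-(1+o(1))\sqrt{\log x\log\log x}\bigr)$ for your chosen $W$, and this you do not prove. Your structural observation---that $w(m)\le W$ forces every prime $q>W$ dividing $z(p)$, for each $p\mid m$, to itself divide $m$, generating a descending tree of primes---is correct and is indeed the mechanism behind the known bounds on $\#\{m\le x:m\mid u_m\}=N(1)$ in \cite{LT15,Sa17}. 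But your final paragraph is a heuristic, not an argument: you write ``presumably via a Buchstab-type iteration'' and openly call this ``the main obstacle''. A tree of depth $\le \log x/\log W$ does not by itself give a count comparable to $\Psi(x,W)$; one must control the branching, and that is exactly what the iteration has to do. As written, the exponent $\sqrt{\log\log x/\log x}$ is unsupported.

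For orientation, the paper's proof of the \emph{stronger} upper bound in Theorem \ref{mainuncond} takes a different route that sidesteps $N(W)$ entirely: it parametrises by $k=n/g(n)$ and by the value $z=z(n')$, and then feeds in two ready-made inputs---the Gordon--Pomerance bound $\#\{n\le t:z(n)=z\}\le t/L(t)^{1/2+o(1)}$ (Lemma \ref{gp}) and the Pollack-style estimate $\#\{n\le x:\gcd(n,z(n))>L(x)^{12}\}\ll x/L(x)^{1+o(1)}$ (Theorem \ref{paul}). Both of these already package, in usable form, the tree/iteration argument you are gesturing at.
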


This much is known about the moment problem for $\mathbb G_{\mathrm{a}} \times \mathbb G_{\mathrm{m}}$.

{In this article, we make crucial progress on the aforementioned problem, providing precise asymptotic estimates under rather standard conjectures and strong unconditional bounds for the important case where $a_2 = \pm 1$.\footnote{ This can also be argued to be the most ``interesting'' case, see Remark \ref{a2} for details.} }

\begin{theorem}\label{maincond}

Let $(u_n)_n$ be a non-degenerate Lucas sequence  with its coefficient $a_2$ (in the characteristic polynomial) equal to $\pm 1$,\footnote{ The assumption $a_2=\pm  1$ is required only for the proof of the lower bound.} and $\lambda \in \mathbb N$ a positive integer. Assume Conjectures \ref{conjgp} and \ref{shiftedconj}. Then as $x \rightarrow +\infty$ we have
\begin{equation}\label{maincondeq}
\sum_{n \leq x} \gcd(n,u_n)^\lambda = x^{\lambda+1-(1+o_{\lambda}(1))\,\log\log\log x/\log\log x}.
\end{equation}
\end{theorem}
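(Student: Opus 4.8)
The plan is to expand the $\lambda$-th power as a divisor sum, translate divisibility by $\gcd(n,u_n)$ into the language of ranks of apparition, reduce the moment to a counting problem about the integers $d$ with $d\mid u_d$, and finally pin that count down — exact exponent included — using Conjectures \ref{conjgp} and \ref{shiftedconj}. Concretely, write $\gcd(n,u_n)^\lambda=\sum_{d\mid\gcd(n,u_n)}f(d)$ with $f:=\mu*\operatorname{Id}^\lambda$, the multiplicative function given by $f(p^k)=p^{\lambda k}-p^{\lambda(k-1)}$ (so $d^{\lambda-1}\varphi(d)\le f(d)\le d^\lambda$). Interchanging summation and using that, for a non-degenerate Lucas sequence and $d$ prime to a fixed finite set of bad primes (the general case following by a standard lossless modification), one has $d\mid u_n\iff\rho(d)\mid n$ with $\rho$ the rank of apparition, we obtain
\[
\sum_{n\le x}\gcd(n,u_n)^\lambda\ \asymp_u\ \sum_{d\,:\,\lcm(d,\rho(d))\le x}f(d)\,\max\!\left(1,\tfrac{x}{\lcm(d,\rho(d))}\right).
\]
Writing $r(d):=\rho(d)/\gcd(d,\rho(d))$, so that $\lcm(d,\rho(d))=d\,r(d)$, and using $\rho(\lcm(a,b))=\lcm(\rho(a),\rho(b))$ together with $\rho(p^k)=p^{k-1}\rho(p)$ (outside the conjecturally empty Wall--Sun--Sun locus), one checks that a generic $d$ has $r(d)$ of size $d/(\log d)^{O(1)}$; thus the constraint $\lcm(d,\rho(d))\le x$ confines $d$ to $d\le x^{1/2+o(1)}$ away from the thin set on which $r(d)$ is small.

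Next, I would split the displayed sum dyadically in the sizes of $d$ and $r(d)$. Since $f(d)\asymp d^{\lambda}$, both the ``$x/\lcm(d,\rho(d))$'' regime and the ``$O(1)$ per $d$'' regime are maximised by taking $d$ as large as possible with $r(d)$ as small as possible; carrying out the summation, the entire moment turns out to equal, up to a factor $x^{o(1)}$,
\[
x^{\lambda}\cdot\mathcal N(x),\qquad\mathcal N(x):=\#\{d\le x:\ d\mid u_d\}
\]
(more precisely the relevant quantity is $\#\{d\le x:\ r(d)\le(\log x)^{O(1)}\}$, which exceeds $\mathcal N(x)$ by at most a factor $x^{o(1)}$). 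Hence the theorem is equivalent to the statement
\[
\mathcal N(x)=x^{\,1-(1+o(1))\log\log\log x/\log\log x},
\]
a count of ``self-divisors'' of the sequence that is of independent interest.

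For the lower bound on $\mathcal N(x)$, fix a parameter $z$, put $L_z:=\lcm(1,\dots,z)$, and take $d=L_z\,m$ with $m$ squarefree, prime to $L_z$, and all of whose prime factors $p$ satisfy $\rho(p)\mid L_z$; by the rank identities above — and this is the only place $a_2=\pm1$ is needed, ensuring those identities hold cleanly enough that such $d$ genuinely satisfy $\rho(d)\mid d$ — every such $d$ lies in $\mathcal N$. The number of admissible $m\le x/L_z$ is a subset count over the prime pool $B_z=\{p:\rho(p)\mid L_z\}$, whose distribution across size-scales (a sufficient supply being furnished by shifted primes $p=e+1$ with $e\mid L_z$) is controlled by Conjecture \ref{shiftedconj} on smooth shifted primes; choosing $z$ optimally — one finds $z=(1+o(1))\,\log x\cdot\log\log\log x/\log\log x$, so that $L_z=x^{\,(1+o(1))\log\log\log x/\log\log x}$ — this count is $x^{\,1-(1+o(1))\log\log\log x/\log\log x}$. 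The matching upper bound runs the argument in reverse: if $r(d)$ is small then, peeling off prime powers via $\rho(p^a)=p^{a-1}\rho(p)$, $d$ must be supported on primes of ``smooth'' rank and divisible by $L_z$ for $z$ as large as this self-referential structure allows, and the number of such $d$ is bounded using Conjecture \ref{conjgp}, which limits how many primes can have small rank of apparition (equivalently, how slowly $\sum_{p:\,\rho(p)\le y}1/p$ can diverge); the same optimisation in $z$ then closes the gap.

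The crux is this last step: extracting the \emph{exact} constant $1$ in the exponent correction $\log\log\log x/\log\log x$. This value sits precisely at the balance point of the optimisation and is only as strong as the conjectural inputs allow — a purely unconditional estimate for smooth shifted primes would yield merely Mastrostefano's weaker exponent $\sqrt{\log\log x/\log x}$ — so the delicate work is to feed the two conjectures in so that the lower-bound construction and its upper-bound counterpart agree to first order. A secondary, more technical point is ensuring the reduction in the first two paragraphs loses only $x^{o(1)}$ rather than swamping $\mathcal N(x)$, which requires care in the dyadic bookkeeping, especially in the range where $\lcm(d,\rho(d))$ is comparable to $x$ and the floor contributes only $O(1)$ per $d$.
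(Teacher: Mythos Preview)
Your lower-bound half is essentially the paper's argument: restrict to $\mathcal N=\{d:d\mid u_d\}$, build many such $d$ by the Erd\H{o}s construction $d=2sM_y$ with $s$ a product of primes $p$ whose shifted value $p-(\Delta/p)$ is $y$-smooth, and feed in Conjecture~\ref{shiftedconj} to guarantee enough such primes. That part is fine, and your divisor-sum rewriting $\sum_n g(n)^\lambda=\sum_d J_\lambda(d)\lfloor x/\ell(d)\rfloor$ is a clean equivalent of the paper's opening manipulation.

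The upper bound, however, has a real gap. You assert that ``carrying out the summation'' collapses the moment to $x^{\lambda}\mathcal N(x)$ up to $x^{o(1)}$, but the hard direction of that claim --- bounding the contribution of all $d$ with $r(d)>1$ by $x^{\lambda+1}/L(x)^{1+o(1)}$ --- is exactly the upper bound you are trying to prove, and your dyadic sketch does not explain how it is done. Separately, your plan for bounding $\mathcal N(x)$ from above mischaracterises Conjecture~\ref{conjgp}: that conjecture controls $\#\{n\le t:z(n)=z\}$ for arbitrary integers $n$, not ``how many primes can have small rank''; moreover the structural assertion that $r(d)$ small forces $L_z\mid d$ is not true in general and cannot be reversed from the lower-bound construction. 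A naive attempt to sum $\#\mathcal B_z(x)$ over the relevant $z$ loses the exponent (one gets at best $x/L(x)^{1/2+o(1)}$). The paper's route is genuinely different: it splits according to the size of $d'=n/\gcd(n,z(n))$, handles small $d'$ via an independently-proved estimate $\#\{n\le x:\gcd(n,z(n))>L(x)^{12}\}\ll x/L(x)^{1+o(1)}$ (Theorem~\ref{paul}, itself a nontrivial Pollack-type argument), and handles large $d'$ by partial summation over $\mathcal B_z$ using Conjecture~\ref{conjgp} directly. Your outline contains no analogue of the first ingredient, and without it the upper bound does not close.

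Two smaller inaccuracies: $\rho(p^a)=p^{a-1}\rho(p)$ need not hold exactly (Lemma~\ref{lem:basic}\ref{item4b} only gives $\rho(p^a)=p^{e}\rho(p)$ with $0\le e<a$), and the remark that unconditional inputs recover only Mastrostefano's $\sqrt{\log\log x/\log x}$ is off --- the paper obtains $x^{\lambda+1}/L(x)^{1/2+o(1)}$ unconditionally, which is much stronger.
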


\medskip

Unconditionally, we obtain the following result.

\begin{theorem}\label{mainuncond}

Let $(u_n)_n$ be a non-degenerate Lucas sequence,  and $\lambda \in \mathbb N$ a positive integer. Then as $x \rightarrow +\infty$ we have the upper bound
 \begin{equation}\label{mainuncondeq}  \sum_{n \leq x} \gcd(n,u_n)^\lambda \leq x^{\lambda+1-(1/2+o_\lambda(1))\,\log\log\log x/\log\log x}. 
 \end{equation} Moreover, if the coefficient $a_2$ (in the characteristic polynomial) is equal to $\pm 1$, we  have the lower bound 
 \begin{equation}
     \sum_{n\le x}\gcd(n,u_n)^{\lambda}\ge x^{\lambda+0.715}.
 \end{equation}
\end{theorem}

It does not seem feasible to prove Theorem \ref{maincond} unconditionally. While the coefficient $1/2$ in the upper bound of Theorem \ref{mainuncond} is merely an artifact of the method, the exponent in the lower bound essentially rests on the distribution of smooth shifted primes, which is presently not known unless one is willing to assume the Elliott--Halberstam conjecture (and would equal $\lambda+1+o(1)$ in such case).

\medskip

Given Theorem \ref{maincond}, one may wonder what further progress can be made. The composite integers $n$ dividing $u_n$ generalize pseudoprimes and are called \emph{Lucas pseudoprimes}: based on this analogy, we conjecture, similarly to \cite[\S 4]{Po81} for pseudoprimes, that the moments should satisfy the following refined asymptotic.

\begin{conjecture}
With the notation of Theorem \ref{maincond}, we have as $x \rightarrow +\infty$,
\[ \sum_{n \leq x} \gcd(n,u_n)^\lambda = x^{\lambda+1} \exp \left( -\frac{\log x}{\log\log x} \left(\log\log\log x+\log\log\log\log x+o(1)\right) \right) .\]
\end{conjecture}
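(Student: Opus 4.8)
The plan is to sharpen, on both sides, the optimization that yields the leading term of Theorem \ref{maincond}, and to feed into it asymptotic (rather than merely $x^{o(1)}$-sharp) forms of Conjectures \ref{conjgp} and \ref{shiftedconj}. First I would open up the moment by writing $m^\lambda=\sum_{d\mid m}J_\lambda(d)$ with $J_\lambda$ the Jordan totient, so that
\[ \sum_{n\le x}\gcd(n,u_n)^\lambda=\sum_{d}J_\lambda(d)\,\#\{n\le x:\ d\mid\gcd(n,u_n)\}. \]
For squarefree $d=\prod_i p_i$ one has $d\mid\gcd(n,u_n)$ iff $\lcm(d,\rho(d))\mid n$, where $\rho$ denotes the rank of apparition and $\rho(d)=\lcm_i\rho(p_i)$; hence the inner count is $x/\lcm(d,\rho(d))+O(1)$ and the main term becomes $x\sum_{\lcm(d,\rho(d))\le x}J_\lambda(d)/\lcm(d,\rho(d))$, with $J_\lambda(d)=d^{\lambda+o(1)}$. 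Since $\lcm(d,\rho(d))\ge d$, the summand is at most $d^{\lambda-1}$, equality holding up to the cofactor $s(d):=\lcm(d,\rho(d))/d$; the dominant $d$ are the \emph{nearly $\rho$-closed} ones, for which $s(d)$ is small, the $\rho$-closed ones being exactly the Lucas pseudoprimes $d\mid u_d$. Carrying out this reduction transforms the conjecture into the sharp weighted count
\[ \sum_{\substack{d\le x\\ s(d)\le d^{o(1)}}}\frac{d^{\lambda-1}}{s(d)}=x^{\lambda}\exp\!\Big(-\tfrac{\log x}{\log\log x}\big(\log\log\log x+\log\log\log\log x+o(1)\big)\Big), \]
the Lucas analogue of Pomerance's refined pseudoprime heuristic in \cite[\S4]{Po81}.

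For the lower bound I would run the Erd\H{o}s--Pomerance construction with a second-order choice of parameters. Fix a smooth modulus $L=\prod_{q\le z}q^{e_q}$ and set $\mathcal P(L)=\{p:\rho(p)\mid L\}$, the primes dividing $u_L$, whose counting function is governed by Conjecture \ref{conjgp}. Forming $d=L\cdot p_1\cdots p_k$ with distinct $p_i\in\mathcal P(L)$ makes $d$ $\rho$-closed, so $s(d)=1$, and contributes $d^{\lambda-1}$ as soon as $\prod_i p_i\le x/L$. The total contribution is thus a weighted count of $k$-fold products from $\mathcal P(L)$, and the first-order optimization of $z$ (with $\log L\approx z$) and of $k\approx\log x/\log L$ reproduces the $\log\log\log x/\log\log x$ of Theorem \ref{maincond}. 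The extra $\log\log\log\log x$ appears only when one optimizes to second order: writing the logarithm of the contribution as a function of $(z,k)$ and locating its maximum by a saddle-point/Lagrange analysis, the correction to the optimal $z$ is of relative size $\log\log\log\log x/\log\log\log x$, which is exactly what promotes $(1+o(1))\log\log\log x$ to $\log\log\log x+\log\log\log\log x+o(1)$. Making this rigorous requires the count of $\mathcal P(L)\cap[1,y]$ to be known asymptotically, uniformly in the relevant range---the strong form of Conjecture \ref{conjgp}.

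The matching upper bound is the harder half. I would bound the reduced sum by decomposing dyadically in the size of $d$ and in the smoothness parameter $y=P^+(\rho(d))$ of the rank. A nearly $\rho$-closed $d$ has every prime factor $p$ with $\rho(p)$ essentially $y$-smooth; since $\rho(p)\mid p-\left(\tfrac{\Delta}{p}\right)$ with $\Delta$ the discriminant, such $p$ are smooth shifted primes, counted by Conjecture \ref{shiftedconj}. Combining an upper bound for the number of admissible primes with a combinatorial (sieve) bound on the number of their products of a given size---the exact analogue of Pomerance's upper bound for pseudoprimes---yields an estimate of the same shape, and the two bounds can be matched term by term provided the smooth-shifted-prime count is available as a genuine asymptotic, with the Dickman $\rho$-function, throughout the range of the smoothness parameter dictated by the optimization.

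The main obstacle is precisely this last requirement. The leading term of Theorem \ref{maincond} is insensitive to the density of smooth shifted primes up to a factor $x^{o(1)}$, whereas the secondary term $\log\log\log\log x$ is governed by the \emph{precise} asymptotic density---the value of the Dickman function and its logarithmic derivative---of primes $p$ with $\rho(p)$ smooth, and by the fine anatomy of integers all of whose prime factors have smooth rank. Neither is within reach of current technology even under Elliott--Halberstam, since one needs asymptotics for smooth shifted primes that are uniform in a range of the smoothness parameter where not even the main term is presently known; this, together with the genuinely two-parameter nature of the second-order optimization (the joint tuning of $z$, $k$, and the cofactor $s(d)$), is what keeps the refined asymptotic a conjecture rather than a theorem.
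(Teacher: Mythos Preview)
The statement you are addressing is a \emph{Conjecture}, not a theorem: the paper offers no proof of it at all. The only justification given is the single sentence ``based on this analogy, we conjecture, similarly to \cite[\S 4]{Po81} for pseudoprimes,'' followed by the displayed formula. There is therefore no ``paper's proof'' against which to compare your attempt.

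Your write-up is not a proof either, and to your credit you say so explicitly in the final paragraph: the second-order term $\log\log\log\log x$ requires genuine asymptotics (not just $x^{o(1)}$-sharp bounds) for smooth shifted primes in a range where even the main term is unknown, and you correctly identify this as the obstruction. What you have produced is a heuristic roadmap that fleshes out, in considerably more detail than the paper does, how the analogy with Pomerance's pseudoprime conjecture would be expected to run: the Jordan-totient opening, the reduction to nearly $\rho$-closed integers, the Erd\H{o}s--Pomerance construction with second-order parameter optimization for the lower bound, and the smoothness-of-rank decomposition for the upper bound. This is a sensible expansion of the paper's one-line motivation, and the diagnosis of where the argument breaks down is accurate.

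In short: there is no gap to name because you never claim a proof, and there is no comparison to make because the paper never attempts one. Your heuristic is consistent with---and more explicit than---the paper's own justification for stating the conjecture.
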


Our work has a number of consequences on several results and articles. We solve Conjectures 3 and 5 of \cite{Tr20} in the case $a_2=\pm 1$ and the main conjecture of \cite{LT15}; we improve \cite[Th. 1.3]{ALPS12} unconditionally thanks to a bound of Lichtman (see Corollary~\ref{Nu}); we give a new proof for the main results in \cite{LT15} and \cite{Sa17} (see Theorem~\ref{gordonupper}) and provide a matching lower bound for their conjectures; we answer a case of Question 1.1 and make progress towards Question 1.2 in \cite{ST18}; we improve Theorem 1.3 and Corollary 1.5 in \cite{Ma19}.

For instance, as an example of what could be extracted from our methods, we give the following improvement on the distribution of $\gcd(n,u_n)$ for suitably large values of $y$. 

\begin{corollary}
    We have 
    \[\#\{n \leq x:\,\gcd(n,u_n)\,>\,y\}\,\le\, x^{2-(1/2+o(1))\,\log\log\log x/\log\log x}/y\] for every $y\ge 1$, when $x$ is sufficiently large.
\end{corollary}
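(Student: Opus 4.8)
The plan is to derive this as an immediate consequence of the upper bound in Theorem~\ref{mainuncond} via Markov's inequality, so that the only real content lies in the choice of which moment to use. First I would dispose of the degenerate range $y \ge x$: since $\gcd(n,u_n)$ divides $n$, we have $\gcd(n,u_n) \le n \le x$ whenever $n \le x$, so the set in question is empty and the stated bound (whose right-hand side is positive) holds trivially. Thus one may assume $1 \le y < x$.

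For $1 \le y < x$ I would apply Markov's inequality to the nonnegative arithmetic function $\gcd(n,u_n)$ on $[1,x]$, obtaining
\[ \#\{n \le x : \gcd(n,u_n) > y\} \le \frac{1}{y} \sum_{n \le x} \gcd(n,u_n), \]
and then feed in the case $\lambda = 1$ of the upper bound in \eqref{mainuncondeq}, namely
\[ \sum_{n \le x} \gcd(n,u_n) \le x^{2-(1/2+o(1))\,\log\log\log x/\log\log x} \]
for $x$ sufficiently large, to conclude. The error term $o(1)$ is inherited verbatim from Theorem~\ref{mainuncond}, and combining the two displays gives exactly the claimed estimate.

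I would also record the observation that passing to higher moments does not improve matters: applying Markov with $\gcd(n,u_n)^\lambda$ together with \eqref{mainuncondeq} yields the bound $x^{\lambda+1-(1/2+o(1))\,\log\log\log x/\log\log x}/y^\lambda$, which beats the $\lambda = 1$ version only when $y \ge x$ — a range already handled directly — so $\lambda = 1$ is the optimal choice here. Consequently there is no genuine obstacle in this corollary: all the difficulty has been absorbed into Theorem~\ref{mainuncond} itself, whose proof (bounding the moment sum by controlling the contribution of those $n$ sharing a large smooth-ish factor with $u_n$) is where the actual work resides.
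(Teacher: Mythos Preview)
Your argument is correct and matches the paper's proof exactly: both apply Markov's inequality with $\lambda=1$ to the upper bound in Theorem~\ref{mainuncond}. Your additional remarks on the degenerate range $y\ge x$ and on higher moments are sound but inessential, since the Markov step already covers all $y\ge 1$ uniformly.
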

\begin{proof}
    Employing Theorem \ref{mainuncond} above for $\lambda=1$, we obtain 
    \[\#\{n \leq x:\,\gcd(n,u_n)\,>\,y\}\,\le\, \frac{1}{y}\sum_{n\le x} \gcd(n,u_n)\,\le\, x^{2-(1/2+o(1))\,\log\log\log x/\log\log x}/y \] for every $y\ge 1$.
\end{proof}
This estimate is an improvement of \cite[Cor. 1.5]{Ma19} and \cite[Cor. 1.3]{Sa18}, for values of $y$ larger than $x^{1-(1/2+o(1))\,\log\log\log x/\log\log x}$.

\subsection{Acknowledgements} The authors thank the anonymous referee for pointing out several issues in an earlier draft of the manuscript and many insightful comments that greatly improved the quality of the work. The first-named author would like to thank Jared Duker Lichtman for several useful discussions concerning Theorem~\ref{thm:shftdprime} of the paper.

\section{Preliminaries on Lucas sequences}

For each positive integer $m$ which is relatively prime with $a_2$, we let $z_u(m)$ be the \emph{rank of appearance} of $m$ in the Lucas sequence, that is, the smallest positive integer $n$ such that $m$ divides $u_n$.
It is well known that $z_u(m)$ exists \cite{Re13}.
Furthermore, put $\ell_u(m) := \lcm\!\big(m, z_u(m)\big)$ and, for each positive integer $n$, let $g_u(n) := \gcd(n, u_n)$.  To save space, we write $\log_k$
for the $k$-th iterate of the natural logarithm function, for all arguments for which it is defined, and define $L(x):=x^{\log_3 x/\log_2 x}$. From now on, the sequence $(u_n)_n$ will always be fixed, and all dependencies on it shall be omitted, so we will, for instance, write $z$, $\ell$, $g$, $\Delta$ instead of $z_u$, $\ell_u$, $g_u$, $\Delta_u$. We also consider $\lambda\geq 1$ to be fixed once and for all.

The following lemma collects some elementary properties of the functions $z$, $\ell$, $g$.

\begin{lemma}\label{lem:basic}
For all positive integers $m,n$ and all prime numbers $p$, with $p \nmid a_2$, we have:
\begin{enumerate}[label={\rm (\roman{*})},itemsep=0.5em]
\item \label{item1b} $m \mid u_n$ if and only if $\gcd(m, a_2) = 1$ and $z(m) \mid n$.
\item \label{item2b} $z(\lcm(m,n))=\lcm(z(m),z(n))$ whenever $\gcd(mn, a_2) = 1$.

\item \label{item3b} If $p$ is odd, then $z(p)\mid p - (-1)^{p-1}\left(\frac{\Delta}{p}\right)$, where $\left(\frac{\cdot}{p}\right)$ is the Legendre symbol. For $p=2$, $z(2)$ is $2$ if $a_1$ is even, and $3$ if $a_1$ is odd.
\item \label{item4b} $z(p^n)=p^{e(p, n)}\,z(p)$, where $e(p, n)$ is some nonnegative integer less than $n$.
\item \label{item5b} $\ell(p^n) = p^n z(p)$ if $p \nmid \Delta$, and $\ell(p^n) = p^n$ if $p \mid \Delta$.
\item \label{item6b} $g(m) \mid g(n)$ whenever $m \mid n$.
\item \label{item7b} $n \mid g(m)$ if and only if $\gcd(n, a_2) = 1$ and $\ell(n) \mid m$.
\end{enumerate}
\end{lemma}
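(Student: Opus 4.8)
The plan is to derive every item from two classical facts about Lucas sequences, both available in \cite{Re13}: the \emph{law of apparition} (for $\gcd(m,a_2)=1$, the set $\{n\ge 1:\ m\mid u_n\}$ is exactly the set of positive multiples of $z(m)$) and the \emph{law of repetition} (the controlled growth of $v_p(u_n)$ along multiples of $z(p)$). Concretely, I would first record the addition formula $u_{m+n}=u_m u_{n+1}+a_2 u_{m-1}u_n$, proved by a one-line induction on $n$ from the recurrence, together with its companion $u_{m}u_{n+1}-u_{m+1}u_n=(-a_2)^{n}u_{m-n}$; from the former, an easy induction gives $u_d\mid u_n$ whenever $d\mid n$, and also $\gcd(u_n,a_2)=1$ for all $n\ge 1$ (inductively $\gcd(u_n,a_2)=\gcd(a_1u_{n-1},a_2)=\gcd(u_{n-1},a_2)$ since $\gcd(a_1,a_2)=1$). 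The latter identity shows that, modulo an $m$ coprime to $a_2$, the vanishing index set of $u$ is stable under subtraction, hence equals the set of multiples of its least positive element $z(m)$. Non-degeneracy guarantees $u_n\ne 0$ for $n\ge 1$, so every gcd below is positive.

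Granting this, \ref{item1b} is precisely the law of apparition, the coprimality condition being forced by $\gcd(u_n,a_2)=1$. Item \ref{item2b} is then immediate: if $m\mid n$ with $\gcd(mn,a_2)=1$, then $m\mid n\mid u_{z(n)}$, so $z(m)\mid z(n)$ by \ref{item1b}. For \ref{item3b} I would reduce modulo $p$ and split on the Legendre symbol: if $(\Delta/p)=1$ the characteristic roots lie in $\mathbb F_p^{\times}$ and satisfy $\alpha^{p-1}=\beta^{p-1}=1$, so $p\mid u_{p-1}$; if $(\Delta/p)=-1$ they lie in $\mathbb F_{p^2}$ and are interchanged by Frobenius, whence $\alpha^{p+1}=\beta^{p+1}=\alpha\beta=-a_2$ and $p\mid u_{p+1}$; if $p\mid\Delta$ there is a double root $\rho\ne 0$ with $u_n\equiv n\rho^{n-1}\pmod p$, giving $z(p)=p$; in each case \ref{item1b} converts this into the stated divisibility, and for $p=2$ one checks directly that $z(2)\in\{2,3\}$. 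Item \ref{item4b} follows from the law of repetition: $v_p(u_{kz(p)})=v_p(u_{z(p)})$ when $p\nmid k$, while $v_p(u_{pm})\ge v_p(u_m)+1$ whenever $p\mid u_m$ — for $p=2$ this uses $u_{2m}=u_m v_m$ with $v_m$ the companion sequence, which is even when $u_m$ is by $v_m^2=\Delta u_m^2+4(-a_2)^m$ — so $z(p^n)=p^{e(p,n)}z(p)$ with $e(p,n)=\max(0,\,n-v_p(u_{z(p)}))\le n-1$. Finally, \ref{item5b} combines \ref{item3b} and \ref{item4b}: since $e(p,n)<n$ we have $p^{e(p,n)}\mid p^n$, so $\ell(p^n)=\lcm(p^n,z(p))$; if $p\nmid\Delta$ then $z(p)$ is coprime to $p$ (it divides $p\mp 1$ by \ref{item3b}, or equals $3$ when $p=2$) and $\ell(p^n)=p^n z(p)$, whereas if $p\mid\Delta$ then $z(p)=p$, so $z(p^n)=p^{e(p,n)+1}$ with $e(p,n)+1\le n$ and $\ell(p^n)=p^n$.

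For the last two parts, \ref{item6b} is one line: $m\mid n$ gives $u_m\mid u_n$, hence $g(m)=\gcd(m,u_m)\mid\gcd(n,u_n)=g(n)$. For \ref{item7b} I would simply unwind the definitions through \ref{item1b}: $n\mid g(m)=\gcd(m,u_m)$ holds iff $n\mid m$ and $n\mid u_m$, iff $n\mid m$, $\gcd(n,a_2)=1$ and $z(n)\mid m$, iff $\gcd(n,a_2)=1$ and $\lcm(n,z(n))=\ell(n)\mid m$.

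I do not anticipate a real obstacle: the lemma is genuinely elementary, and the only non-bookkeeping inputs — the ideal structure of the vanishing set and the $p$-adic growth of $v_p(u_n)$ — are standard and could simply be cited from \cite{Re13} if desired. The points deserving a moment of care are the primes $p=2$ and $p\mid\Delta$ in \ref{item3b}–\ref{item5b}, and the trivial-but-used facts that $u_n\ne 0$ and $\gcd(u_n,a_2)=1$ for every $n\ge 1$, which is exactly where non-degeneracy and $\gcd(a_1,a_2)=1$ enter.
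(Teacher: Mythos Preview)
Your proposal is correct and follows the standard route through the law of apparition and the law of repetition; the paper's own proof simply cites \cite{Re13}, \cite{Ri00}, \cite{Sa17} for \ref{item1b}--\ref{item4b} and declares \ref{item5b}--\ref{item7b} immediate from the definitions, so you are in effect supplying the details that the paper defers to the literature. The only places requiring any care---the primes $p=2$ and $p\mid\Delta$, and the use of non-degeneracy and $\gcd(a_1,a_2)=1$---are exactly the ones you flag, and your treatment of them is sound.
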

\begin{proof}
\ref{item1b}--\ref{item4b} are well-known properties of the rank of appearance of a Lucas sequence (see e.g.~\cite{Re13}, \cite[Ch.~1]{Ri00}, or \cite[\S2]{Sa17}); while \ref{item5b}--\ref{item7b} follow easily from \ref{item1b}--\ref{item4b} and from the definitions of $\ell$ and $g$.
\end{proof}

\section{The upper bound}\label{upperbound}

To show the upper bounds in Theorems \ref{maincond} and \ref{mainuncond}, we let $n' = g(n)$ and $k=n/g(n)$, so $n=n'k$. The sum is taken over pairs $(n',k)$ such that $n'k \le x$ and $n' = \gcd(n'k, u_{n'k})$.
The condition $n' = \gcd(n'k, u_{n'k})$ implies $n' \mid u_{n'k}$. By Lemma \ref{lem:basic}\ref{item1b}, this is equivalent to the simultaneous conditions $\gcd(n',a_2)=1$ and  $z(n') \mid n'k$.
Thus, we have
\[ \sum_{n \leq x} g(n)^\lambda = \sum_{\substack{n'k \leq x \\ n' = \gcd(n'k, u_{n'k})}} (n')^\lambda \,\leq \sum_{k \leq x} \sum_{\substack{n' \leq x/k \\ z(n') \mid n'k}} (n')^\lambda\leq x^\lambda \sum_{k \leq x} \frac{1}{k^\lambda} \sum_{\substack{n' \leq x/k \\ z(n')\mid n'k}}1. \]

\noindent For ease of notation, we replace $n^{'}$ by $n$ and henceforth assume that $\gcd(n,a_2)=1$ throughout this section---for the values of $n$ that do not satisfy this, $z(n)$ is not defined, and they do not contribute to the sum anyway. We define $d_0(n,k) := \min \{d\ge 1 : d \mid k\text{ and } z(n)\mid dn \}$.
It follows that
\begin{equation}\label{d0}\sum_{k\le x} \frac 1 {k^\lambda} \sum_{\substack{n\le x/k\\ z(n) \mid nk}} 1= \sum_{k \le x} \frac 1{k^\lambda} \,\sum_{d \mid k} \sum_{\substack{n \le x/k\\ d_0(n,k)=d}} 1.\end{equation}

\noindent Note that in case $d_0(n,k)=d$ for some positive integer $n$, then $z(n)d'=dn$ for some positive integer $d'$. We infer that $\gcd(d,d')=1$ due to minimality of $d$. Keeping this observation in mind, we get 
\begin{equation}\label{eq12} \sum_{\substack{n \le x/k\\ d_0(n,k)=d}} 1 \le  \sum_{\substack{d'\le x \\ \gcd(d,d')=1}}\sum_{\substack{n \le x/k\\ z(n)/n = d/d'\\ }}1.\end{equation}

\noindent To bound this double sum, we distinguish two cases according to the size of $d'$.
\medskip

{\bf Case 1.} $d'\le x/L(x)^{13}$.

Note that since $\gcd(d,d')=1$, in order to bound the double sum on the right-hand side of \eqref{eq12} by counting admissible pairs $(d',n)$, we need to count the number of $n\le x/k$ such that $n/\gcd(n,z(n))=d'$. We can assume that $n>x/L(x)$. Since $d'\le x/L(x)^{13}$, we need to bound the number of $n$ such that \[\frac{x}{L(x) \gcd(n,z(n))}\le \frac{n}{\gcd(n,z(n))}\le \frac{x}{L(x)^{13}}.\]

Then we can see that $\gcd(n,z(n))>L(x)^{12}$ for all such $n$. This reduces the problem to bounding the number of integers $n$ for which $\gcd(n,z(n))$ is large. We establish the required bound in the Appendix (Theorem~\ref{paul}), a result which may be of independent interest. We obtain that the count of corresponding $n$'s is at most $x/L(x)^{1+o(1)}$. 

\medskip

{\bf Case 2.} $x/L(x)^{13}<d'\le x.$

Since $d'$ again must divide $n$, write $n = d'y.$
Then $z(d'y) = dy$ and $z(d')/ \gcd(z(d'),d)$ divides $y$.
Hence, for $n \le x/k$, $d'z(d')/\gcd(z(d'),d)$ must divide $n.$
Here, we have $d'z(d')\le x$, whence $z(d')\le L(x)^{13}$. \\Fix $z(d')=z$ in $\left[1,L(x)^{13}\right]$ and set ${\mathcal B}_z:=\{n \in \mathbb N: z(n)=z\}.$ Then by \cite[Th. 3]{GP91} we have the following.
\begin{lemma}[Gordon--Pomerance]\label{gp}
For $t$ large enough independently of $z$,
\begin{equation}\label{eq:B}
\# \mathcal B_z(t) \le t/L(t)^{1/2+o(1)}
\end{equation}
uniformly in $z$.
\end{lemma}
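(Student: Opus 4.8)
This is Theorem~3 of Gordon--Pomerance \cite{GP91}, and I would prove it by reproducing their argument; here is a sketch. The starting point is that, by Lemma~\ref{lem:basic}\ref{item1b}, every $n\in\mathcal B_z$ divides $u_z$, so it suffices to bound, uniformly in $z$, the number of divisors of $u_z$ not exceeding $t$. Each prime $p\mid n$ then satisfies $z(p)\mid z$, whence $p\equiv\pm1\pmod{z(p)}$ by Lemma~\ref{lem:basic}\ref{item3b}; moreover $p\mid u_{z(p)}$ and $|u_m|\le\alpha^m$ for the dominant root $\alpha$ of $X^2-a_1X-a_2$, so $z(p)\gg\log p$ and in particular a prime factor $p>y$ of $n$ forces $z(p)$ to exceed a fixed multiple of $\log y$.

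The plan is then a Gordon--Pomerance dichotomy relative to a slowly growing cutoff $y=y(t)$ with $\log y\asymp(\log_2 t)^2/\log_3 t$. One writes $n=n_s n_\ell$, where $n_s$ is the $y$-smooth part of $n$ and $n_\ell$ the product of the prime-power components supported on primes $>y$. The smooth parts $n_s\le t$ number at most $\Psi(t,y)$, and by de Bruijn's estimate $\Psi(t,y)=t\,\rho(u)^{1+o(1)}$ with $u=\log t/\log y$; the choice of $y$ makes $u\log u\sim\tfrac12\,\log t\cdot\log_3 t/\log_2 t$, so that $\Psi(t,y)\le t/L(t)^{1/2+o(1)}$, and this is (morally) where the exponent $1/2$ originates, from the balance between the smooth contribution and that of the large prime factors. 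For $n_\ell$ one must argue that the surviving configurations contribute only a further factor $L(t)^{o(1)}$; here a crude count of products of primes from the set $\{p:z(p)\mid z\}$ is far too lossy (that set can have size a power of $L(t)$ when $z$ is large), so one has to keep using that $n$ genuinely divides $u_z$: organising the prime factors of $n$ by the size of $z(p)$, using that a large $z(p)\mid z$ confines $p$ — and hence $n$ — to few residue classes, applying Brun--Titchmarsh in those progressions, and finally invoking that $z(n)=z$ forces the least common multiple of the $z(p^a)$ to be exactly $z$. Multiplying the two estimates gives the bound.

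The main obstacle is precisely this rough part together with the demand that the estimate be uniform in $z$: when $z$ is itself as large as a fixed power of $L(t)$, the number of eligible primes is comparably large, so the saving cannot come from a naive sieve but must be extracted from the rigidity of the congruences $p\equiv\pm1\pmod{z(p)}$ and from the $\lcm$-structure defining $\mathcal B_z$, exactly as in \cite[Th.~3]{GP91}, whose argument I would follow for this step. Once the rough contribution is controlled in this way, combining it with the smooth bound yields $\#\mathcal B_z(t)\le t/L(t)^{1/2+o(1)}$ uniformly in $z$, for $t$ large.
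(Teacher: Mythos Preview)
Your proposal matches the paper: the lemma is stated there as a direct citation of \cite[Th.~3]{GP91} with no proof supplied, and you likewise identify it as that result and defer to Gordon--Pomerance for the substance. Your added sketch of the smooth/rough dichotomy and the origin of the exponent $1/2$ is extra detail beyond what the paper provides, and is a reasonable outline of the strategy; since the paper gives no argument of its own, there is nothing further to compare.
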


We now let $d'\in {\mathcal B}_z$. Then $n\le x/k$ is a multiple of $d'z/\gcd(z,d)$. The  number of such $n$ is at most $\lfloor x\,\gcd(z,d)/kd'z\rfloor\le x/d'z$, as $d\mid k$. Summing up the above inequality over $d'\in {\mathcal B}_z$ and using partial summation and \eqref{eq:B}, we have
\begin{align*}
\frac{x}{z}\sum_{\substack{d'\in {\mathcal B}_z \\ x/L(x)^{13}<d'\le x}} \frac{1}{d'}  &=  \frac{x}{z}\int_{x/L(x)^{13}}^x \frac{d \# {\mathcal B}_z(t)}{t} 
\\ &=  \frac{x}{z}\left(\frac{\# {\mathcal B}_z(t)}{t}\Big|_{t=x/L(x)^{13}}^{t=x} +\int_{x/L(x)^{13}}^x \frac{\# {\mathcal B}_z(t)}{t^2} d t\right)
 \\&\leq  \frac{x}{z} \left(\frac{\# {\mathcal B}_z(x)}{x}+\int_{x/L(x)^{13}} ^x \frac{d t}{t\, L(t)^{1/2+o(1)}}\right)
\\&=  \frac{x}{z}\left(\frac{1}{L(x)^{1/2+o(1)}}+\frac{1}{L(x)^{1/2+o(1)}} \int_{x/L(x)^{13}}^x \frac{dt}{t}\right)
 \\&=   \frac{x}{z\, L(x)^{1/2+o(1)}},
\end{align*}

where in the above calculation we used the fact that 
\[
L(t)^{1/2+o(1)}=L(x)^{1/2+o(1)}\quad {\text{\rm uniformly~in}}\quad t\in \left[x/L(x)^{13},x\right]\quad {\text{\rm as}}\quad x\to+\infty.
\]

We now sum over $z\in \left[1,L(x)^{13}\right]$ and obtain that
\[ \sum_{\substack{x/L(x)^{13}\le d'\le x \\ (d,d')=1}}\sum_{\substack{n \le x/k\\ z(n)/n = d/d',\\ }}1\le \frac{x}{L(x)^{1/2+o(1)}}\sum_{1\le z\le L(x)^{13}} \frac{1}{z}\\ =\frac{x}{L(x)^{1/2+o(1)}}.\]

\medskip

Combining Cases 1 and 2, we have  \[\sum_{\substack{n \le x/k\\ d_0(n,k)=d}}1\le\frac{x}{L(x)^{1/2+o(1)}}. \] 

Substituting this upper bound in (\ref{d0}),  \[\sum_{k \leq x} \frac{1}{k^\lambda}\, \sum_{d \mid k}\sum_{\substack{n \le x/k\\ d_0(n,k)=d}}1  \le \sum_{k \leq x} \frac{\tau(k)}{k^\lambda}\,\cdot\frac{x}{L(x)^{1/2+o(1)}} \le \frac{x}{L(x)^{1/2+o(1)}} \] where the last inequality follows as the sum $\sum_{k \leq x} {\tau(k)}/{k^\lambda}$ is $O((\log x)^2)$ if $\lambda=1$ and finite otherwise.
This proves the unconditional upper bound in Theorem \ref{mainuncond}.

\medskip

For the conditional upper bound, we recall the following standard conjecture.

\begin{conjecture}\label{conjgp}
With the same hypotheses as Lemma \ref{gp}, we have
\begin{equation}\label{pomeranceconj}\# \mathcal B_z(t) \leq t/L(t)^{1+o(1)}.\end{equation}
\end{conjecture}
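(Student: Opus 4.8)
The plan, since \eqref{pomeranceconj} sharpens the unconditional Lemma~\ref{gp} rather than following from the earlier results, is to isolate exactly what analytic input the improved exponent would require; no proof is known, and the honest target is a conditional one. The bound is the rank-of-appearance analogue of Pomerance's conjecture on the count of Lucas pseudoprimes \cite[\S 4]{Po81}, with $L(t)$ expected to be the true order of $\#\mathcal B_z(t)$, and the gap between the exponents $1/2$ and $1$ is the familiar one between exploiting an anomalous divisibility property of $p-(\Delta/p)$ for a single prime factor of $n$ and exploiting it for all of them simultaneously.

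Here is how I would set it up. If $z(n)=z$ then $n\mid u_z$, so every prime $p\mid n$ lies in $S(z):=\{p:z(p)\mid z\}$; using Lemma~\ref{lem:basic}\ref{item4b}--\ref{item5b} to control the higher prime powers dividing $n$, and noting that $\mathcal B_z(t)=\varnothing$ unless $z\ll t\log_2 t$, the problem reduces to the key estimate
\[ \#\{n\le t:\ p\mid n\Rightarrow p\in S(z)\}\ \le\ t/L(t)^{1+o(1)}\qquad\text{uniformly in }z, \]
after which one attacks the left side by Rankin's trick, bounding it by $t^\sigma\prod_{p\in S(z)}(1+p^{-\sigma})$ and optimizing $\sigma\approx 1-\log_3 t/\log_2 t$. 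Everything then comes down to a sufficiently strong, uniform-in-$z$ upper bound for $\#\{p\le y:p\in S(z)\}=\sum_{d\mid z}\#\{p\le y:z(p)=d\}$ over all $y\le t$: since $z(p)=d$ forces $p\equiv\pm1\pmod d$, this is governed by $\sum_{d\mid z}\bigl(\pi(y;d,1)+\pi(y;d,-1)\bigr)$, together with the complementary bound $\#\{p:z(p)=d\}\ll\omega(u_d)\ll d$ that takes over for small $d$.

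The obstacle is precisely the uniformity of that last estimate. Brun--Titchmarsh disposes of the divisors $d\mid z$ of moderate size, and the small-$d$ bound handles the rest whenever $z$ has few divisors; but for $z$ that are products of many small primes---equivalently, when $S(z)$ consists of the primes $p$ whose index $z(p)$ is smooth---one is asking for a uniform upper bound on the number of primes $p\le y$ with $z(p)$ running over the divisors of a smooth modulus, which is of the same depth as the distribution of smooth shifted primes and of primes in arithmetic progressions to moduli beyond the Bombieri--Vinogradov range. Bombieri--Vinogradov does not suffice, because the saving must be realized for individual moduli rather than on average; a Barban--Davenport--Halberstam or Elliott--Halberstam-type input is what one would invoke, in the same circle of ideas as Conjecture~\ref{shiftedconj}. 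Absent such an input---or a genuine breakthrough on primes in progressions, which, as noted after Theorem~\ref{mainuncond}, is presently out of reach---\eqref{pomeranceconj} must remain conjectural, which is why Theorem~\ref{maincond} is stated conditionally.
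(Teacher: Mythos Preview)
The statement is a \emph{conjecture}, and the paper does not attempt to prove it: its ``proof'' consists of pointing to a heuristic argument in \cite[Th.~8.10]{Ze14}, the analogous (proved) bound with Euler's $\varphi$ in place of $z$ \cite[\S2]{Po80}, the pseudoprime prototype in \cite{Po81}, and the numerical evidence of Figure~\ref{plot}. Your proposal likewise, and correctly, concludes that \eqref{pomeranceconj} is presently unprovable; so on the bottom line you agree with the paper.

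Where you differ is in the \emph{kind} of justification offered. The paper appeals to external heuristics and data; you instead sketch an internal attack---reduce to integers supported on $S(z)=\{p:z(p)\mid z\}$, apply Rankin with $\sigma\approx 1-\log_3 t/\log_2 t$, and isolate the needed input as a uniform-in-$z$ bound on $\sum_{d\mid z}\pi(y;d,\pm1)$---and then explain why Brun--Titchmarsh and Bombieri--Vinogradov fall short, pushing the problem into Elliott--Halberstam territory. That is a genuinely informative complement to the paper's treatment: it names the analytic obstruction rather than merely citing analogues. One small correction: from $z(n)\le 2n$ (Sall\'e) you get $\mathcal B_z(t)=\varnothing$ unless $z\le 2t$, so the side condition should read $z\ll t$, not $z\ll t\log_2 t$. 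Otherwise your outline is sound as a heuristic discussion, and your identification of the obstacle matches the spirit of the remarks following Theorem~\ref{mainuncond}.
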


This conjecture can be justified on several grounds. For instance, a heuristic proof is given in \cite[Th. 8.10]{Ze14}, while in \cite[\S 2]{Po80} the same bound is proved with the totient function in place of $z$; this conjecture originates from the same conjecture in the setting of pseudoprimes \cite{Po81}, where an identical bound is presumed to hold.

Assuming Conjecture \ref{conjgp}, in order to prove that the moment sum of the GCDs is at most the right-hand side in \eqref{maincondeq}, one just has to run the whole proof in this section while replacing all instances of $t/L(t)^{1/2+o(1)}$ with $t/L(t)^{1+o(1)}$. This concludes the proof of the upper bounds in both Theorems \ref{maincond} and \ref{mainuncond}. 

\medskip 

Let \[\mathcal{N}_u(x):=\{n\in \mathbb N\cap [1,x]\,:\, \gcd(n,a_2)=1\text{ and }g(n)=n\,\}.\] Our argument also provides a new analytic proof of the main theorem in \cite[Th. 1.2]{Sa17}.
\begin{theorem}\label{gordonupper}
Let $(u_n)_n$ be a non-degenerate Lucas sequence. As $x \to +\infty$, one has
\[ \#\mathcal{N}_{u}(x)\le x/L(x)^{1/2+o(1)}. \]
If we moreover assume Conjecture \ref{conjgp}, we also have $\#\mathcal{N}_{u}(x)\le x/L(x)^{1+o(1)}$.
\end{theorem}
\begin{proof}
Note that \[\#\mathcal{N}_u(x)=\sum_{\substack{d\le x }}\sum_{\substack{n \le x\\ z(n)/n = 1/d\\ (n,a_2)=1  }}1.\] This sum is a special case of (\ref{eq12}), so the same argument applies.
\end{proof}

\section{The conditional lower bound}
Our strategy for the conditional lower bound is to show that the main contribution to the sum comes from a specific subset of integers. We will focus on the set $\mathcal{N}_{u}(x)$, which consists of numbers $n \leq x$ that divide $u_n$. We will first demonstrate, via partial summation, that if this set is sufficiently large, it provides the required lower bound. The remainder of the proof is then dedicated to establishing a sufficiently large cardinality for $\mathcal{N}_{u}(x)$ by constructing a large subset $\mathcal{L} \subseteq \mathcal{N}_{u}$ composed of numbers with specific smoothness properties related to shifted primes.

For $X, Y \ge 1$, we let $P(n)$ be the greatest prime factor of the integer $n$, and set
\[ \Pi(X, Y) = \#\{p\le X: P(p-1) \le Y\}. \]
It is natural to expect that shifted primes are smooth with the same relative frequency as generic numbers of the same size. This heuristic, coupled with the prime number theorem, suggests that
\begin{equation}\label{eq:pomconjecture} \Pi(X,Y) \sim \frac{\Psi(X,Y)}{\log{X}} \end{equation}
in a wide range of $X$ and $Y$\!. An explicit conjecture of this kind appears in \cite{Po80}.

\begin{conjecture}[Pomerance]\label{pomconj}
Estimate \eqref{eq:pomconjecture} holds whenever $X,Y\to\infty$ with $X\ge Y$.
\end{conjecture}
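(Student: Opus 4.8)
The plan is to reduce the count of smooth shifted primes to the distribution of primes in arithmetic progressions via a Buchstab-type iteration, and then to feed in the Elliott--Halberstam conjecture as the arithmetic input. The conjecture asserts that $p-1$ is $Y$-smooth with the same frequency $\rho(u)$, $u=\log X/\log Y$, as a random integer of comparable size (here $\rho$ is the Dickman function), so the guiding principle is that a smooth shifted prime is a smooth number $m$ with $m+1$ prime, and the density of primes near $m+1$ is $1/\log X$. The entire difficulty lies in making this local heuristic uniform as $m$ ranges over the smooth numbers.

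First I would set up the recursion by classifying each prime $p\le X$ according to the largest prime factor $q=P(p-1)$. Writing $p-1=q^a s$ with $P(s)<q$ and isolating the main term $a=1$, the condition $q\mid p-1$ becomes $p\equiv 1\pmod q$, so that
\[ \Pi(X,Y) = 1 + \sum_{q \le Y} \#\{p \le X:\ p \equiv 1 \pmod q,\ P((p-1)/q) < q\} + (\text{prime-power corrections}), \]
where the corrections from $a\ge 2$ contribute a lower-order term that can be discarded by a crude bound. This exhibits $\Pi$ as a sum, over smooth moduli $q$, of counts of primes in the progression $1\pmod q$ whose cofactor is again smooth, a self-similar structure paralleling the Buchstab identity $\Psi(X,Y)=1+\sum_{q\le Y}\Psi(X/q,q)$ for ordinary smooth numbers.

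Next I would evaluate the inner counts. Iterating the peeling of prime factors turns $\Pi(X,Y)$ into a sum of terms of the shape $\sum_d \pi(X;d,1)$, where $d$ runs over smooth numbers built from the removed prime factors and ranges up to almost $X$. Replacing $\pi(X;d,1)$ by its expected value $\pi(X)/\phi(d)$ with the error controlled on average over $d\le X^{1-\varepsilon}$ --- which is exactly what Elliott--Halberstam supplies --- the weight $1/\phi(d)$ combined with the density of smooth cofactors reproduces, term by term, the Dickman delay-differential equation $u\rho'(u)=-\rho(u-1)$ that governs $\Psi(X,Y)/X$. Carrying the main terms through yields $\Pi(X,Y)\sim \rho(u)\,\pi(X)\sim \Psi(X,Y)/\log X$, which is the assertion.

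The main obstacle is the level of distribution. When $Y$ is small compared to $X$ (large $u$), the dominant smooth shifted primes have $p-1$ composed entirely of small primes, and counting these through the iteration forces the moduli $d$ up to $X^{1-\varepsilon}$; the Bombieri--Vinogradov theorem, with its level $\theta=1/2$, settles only the range $Y\ge X^{1/2+\varepsilon}$, so genuine access to $\theta\to 1$, i.e.\ Elliott--Halberstam, is unavoidable --- this is precisely why the statement remains conjectural. A secondary but real difficulty is uniformity: the number of Buchstab steps grows with $u$, so the errors from each application of Elliott--Halberstam and from the passage $\Psi(X,Y)\sim X\rho(u)$ (which itself degrades for very small $Y$, where the Hildebrand--Tenenbaum saddle-point estimates must replace the Dickman asymptotic) have to be controlled uniformly and kept from accumulating across the full range $X\ge Y$.
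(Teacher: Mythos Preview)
The statement you address is labelled a \emph{conjecture} in the paper, not a theorem; the paper offers no proof and simply records it as Pomerance's conjecture before passing to the weaker Conjecture~\ref{shiftedconj}, which is the hypothesis actually assumed in the proof of Theorem~\ref{maincond}. There is therefore no argument in the paper to compare your proposal against.

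On its own merits, your sketch correctly captures the standard heuristic and the Granville-type argument that, under Elliott--Halberstam, yields $\Pi(X,Y)\sim\rho(u)\,\pi(X)$ for each \emph{fixed} $u=\log X/\log Y>1$; the paper itself alludes to this route via \cite{Ag22}. But there is a genuine gap between that and the full range $X\ge Y$ of Pomerance's conjecture. Once $u\to\infty$, and in particular once $Y$ drops below every fixed power of $X$, the target main term $\Psi(X,Y)/\log X = X\exp\big(-(1+o(1))u\log u\big)$ becomes smaller than $X/(\log X)^A$ for every fixed $A$, whereas the Elliott--Halberstam error you plan to feed in saves only an arbitrary power of $\log X$. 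In that regime the accumulated error from your Buchstab iteration dominates the main term, and no amount of ``controlling errors uniformly'' within this framework can repair the argument. This is not a technicality: even granting full Elliott--Halberstam, the asymptotic is \emph{not} known in the range the paper actually needs (for instance $u=\exp((1+o(1))\sqrt{\log X})$ as in Conjecture~\ref{shiftedconj}), which is exactly why the paper must assume it as a separate hypothesis rather than deduce it from Elliott--Halberstam.
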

This conjecture has been used to solve various problems on arithmetic functions \cite{Po19}. 

\medskip

In our context, involving a Lucas sequence with discriminant $\Delta$, the relevant analogue concerns primes $p$ where $p-(\Delta/p)$ is smooth, rather than $p-1$. Therefore, we define:
\[\Pi^{*}(X, Y) = \#\{p\le X: P(p-(\Delta/p)) \le Y\}. \] Based on the ideas of Granville, it was proved in \cite[Lemma 6.2.1]{Ag22} that, under the weak Elliott--Halberstam conjecture, it holds that
\[ \Pi^{*}(X,Y) \sim \frac{\Psi(X,Y)}{\log{X}} \] where $U=\log X/\log Y$ is a fixed constant greater than $1$. In the same vein, it is reasonable to assume, just like in (\ref{eq:pomconjecture}), the following weak instance of Granville's conjecture to hold in a larger range of $U$.
\begin{conjecture}\label{shiftedconj} 
Suppose that $X,Y\to\infty$ with $X\ge Y$ and that $U=\log X/\log Y  =(1+o(1))\sqrt{\log X}$. Then we have
\begin{equation}\label{eq:komconjecture} \Pi^{*}(X,Y) \gg \frac{\Psi(X,Y)}{\log{X}}. \end{equation}
\end{conjecture}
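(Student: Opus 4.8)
The plan is to prove Conjecture \ref{shiftedconj} conditionally, extending the method behind \cite[Lemma 6.2.1]{Ag22}---which, under a weak Elliott--Halberstam hypothesis, yields the full asymptotic $\Pi^{*}(X,Y)\sim\Psi(X,Y)/\log X$ for a \emph{fixed} ratio $U>1$---into the range $U=\exp((1+o(1))\sqrt{\log X})$ in which only the lower bound \eqref{eq:komconjecture} is required. The first reduction removes the Legendre symbol. Read as a Kronecker symbol, $(\Delta/p)$ depends on $p$ modulo $4|\Delta|$ alone, so I would partition the primes into the residue classes $r \bmod 4|\Delta|$ on which $(\Delta/p)$ takes a constant value $\varepsilon\in\{-1,+1\}$. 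On each class the condition $P(p-(\Delta/p))\le Y$ collapses to the fixed-shift condition $P(p-\varepsilon)\le Y$, and it suffices to prove
\[ \#\{p\le X:\ p\equiv r \pmod{4|\Delta|},\ P(p-\varepsilon)\le Y\}\ \gg\ \frac{\Psi(X,Y)}{\log X} \]
for one admissible pair $(r,\varepsilon)$; summing over the finitely many classes then recovers \eqref{eq:komconjecture}.

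The heart of the argument is a lower-bound sieve in the style of Granville. I would factor each admissible shifted prime as $p-\varepsilon=ab$, isolating a divisor $a$ built from the prime factors of $p-\varepsilon$ below a level $D=X^{\theta-o(1)}$ and leaving a $Y$-smooth cofactor $b$; dually, one sums over $Y$-smooth moduli $a\le D$ coprime to $4|\Delta|$ the number of primes $p\le X$ with $a\mid p-\varepsilon$ and $(p-\varepsilon)/a$ still $Y$-smooth. The divisibility $a\mid p-\varepsilon$ pins $p$ to a single residue class modulo $4|\Delta|\,a$, and here the Elliott--Halberstam conjecture at level $\theta$ supplies, on average over $a\le X^{\theta}$, the expected count $\tfrac{1}{\varphi(a)}\,\tfrac{X}{\log X}(1+o(1))$ of such primes. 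Weighting these counts by $1/\varphi(a)$ and summing them against the density of admissible smooth cofactors should reassemble the smooth-number total $\Psi(X,Y)$, up to a constant factor, delivering the desired lower bound.

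The hard part will be uniformity in $U$: whereas \cite{Ag22} fixes $U$, here $U$ grows and $Y=X^{1/U}$ shrinks, so that a typical $Y$-smooth shifted prime carries almost all of its prime factors below the EH level $D$, and one must verify that restricting to an EH-controllable divisor $a\le X^{\theta}$ with a still-$Y$-smooth cofactor retains at least an $L(X)^{o(1)}$-fraction of $\Psi(X,Y)$. Making the comparison with $\Psi(X,Y)$ uniform across the stated range, rather than for a single value of $U$, is exactly the obstruction; it forces a level of distribution $\theta\to1$ (and plausibly, on average, beyond $1$ in the style of Bombieri--Friedlander--Iwaniec), and the precise matching of $\theta$ against the smooth-number density is what pins down the threshold $U=\exp((1+o(1))\sqrt{\log X})$ and keeps the statement conditional.
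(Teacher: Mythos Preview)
The statement you are attempting to prove is labeled and treated in the paper as a \emph{conjecture}: the paper does not prove it, conditionally or otherwise. It is introduced as ``a weak instance of Granville's conjecture'' and then explicitly assumed as a hypothesis in the proof of the conditional lower bound (Theorem~\ref{maincond}); the sentence immediately following it reads ``This assumption plays a crucial role in our proof.'' There is therefore no paper proof to compare your attempt against.

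As for the content of your proposal itself: it is not a proof of Conjecture~\ref{shiftedconj} but a sketch of a possible reduction to an Elliott--Halberstam-type hypothesis, and you yourself identify the obstruction without resolving it. The method of \cite[Lemma~6.2.1]{Ag22} works for \emph{fixed} $U$ precisely because the level of distribution needed stays bounded away from $1$; in the regime $U=\exp((1+o(1))\sqrt{\log X})$ one has $Y=X^{o(1)}$, and capturing a positive proportion of $\Psi(X,Y)$ via moduli $a\le X^{\theta}$ with $Y$-smooth cofactor requires, as you note, $\theta\to 1$. Full Elliott--Halberstam at level $\theta=1-\varepsilon$ for every $\varepsilon>0$ is not known to deliver \eqref{eq:komconjecture} in this range, and your closing remark that one ``plausibly'' needs distribution ``beyond $1$'' is an admission that no standard conjecture you have invoked actually closes the argument. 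So the proposal names the difficulty correctly but does not overcome it; this is consistent with the paper's decision to state the bound as an independent conjecture rather than derive it from Elliott--Halberstam.
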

\noindent This assumption plays a crucial role in our proof.

\medskip

We will show that the set $\mathcal N_u(x)$ is already large enough to contribute the main term to the sum.

To show the conditional lower bound on moments of $g(n)$, we argue as follows: by partial summation,
\[
\sum_{n\le x}\,g(n)^{\lambda} \geq \sum_{\substack{ n \le x \\ n \in \mathcal{N}_u}} n^\lambda = x^\lambda \#\mathcal{N}_u(x) - \int_1^x \lambda t^{\lambda-1} \#\mathcal{N}_u(t) \, dt. \]

We now invoke some facts from calculus \cite[Prop. 1.5.8]{BGT87} to prove a nontrivial integral estimate.

\begin{lemma}[Karamata's integral theorem]\label{karamata}
Suppose that a measurable locally bounded function $\ell : \left [e,+\infty\right) \to \mathbb R^+ $ is such that, for each fixed $c>0$, one has \[ \lim_{x \rightarrow +\infty} \frac{\ell(cx)}{\ell(x)}=1\] (such an $\ell$ is called a \emph{slowly varying} function in the sense of Karamata). Let moreover $\alpha>-1$. Then as $x \to +\infty$,
\[ \int_e^x t^\alpha\, \ell(t)\,dt \sim \frac{x^{\alpha+1}}{\alpha+1}\, \ell(x). \]
\end{lemma}

\begin{lemma} \label{calculus}
Let $f:\left [1, +\infty \right ) \rightarrow \left [ 1, +\infty \right )$ be non-decreasing, such that $1 \leq  f(x) \leq x$ for all $x \geq 1$ and moreover $f(x) = x/L(x)^{1+o(1)}$ as $x \to +\infty$. Then for any fixed $\lambda \geq 1$, as $x \to +\infty$
\[x^\lambda f(x) - \lambda \int_1^x t^{\lambda-1} f(t) \, dt \ge x^{\lambda+1} /L(x)^{1+o(1)}. \]
\end{lemma}
\begin{proof}

For any fixed $\varepsilon>0$, we have that \[  t/L(t)^{1+\varepsilon} \le f(t) \le t/L(t)^{1-\varepsilon}  \] for $t$ large enough. Since we are interested in asymptotics as $x \to +\infty$, we may as well suppose that this holds for $t \ge e$. The function $t \mapsto L(t)$ is slowly varying by direct calculation: indeed, note that $\log_2(cx)=\log_2 x+\log(1+\log c/\log x) = \log_2 x+O(1/\log x)$, and likewise $\log_3(cx) =\log_3 x+O(1/\log x \log_2 x)$, so that
\[
\log \frac{L(cx)}{L(x)} =\frac{\log(cx) \log_3(cx)}{\log_2(cx)}-\frac{\log x \log_3 x}{\log_2 x} = \log c\, \frac{\log_3 x}{\log_2 x} + O\left ( \frac{1}{(\log_2 x)^2} \right ) \rightarrow 0.\]

The function $t \mapsto L(t)^{-(1-\varepsilon)}$ is then slowly varying as well \cite[Prop. 1.3.6(ii)]{BGT87} and we may apply Lemma \ref{karamata} to it. 

Now, fix any $e < y \le x$. From the above discussion, Karamata's theorem, and the condition that $f$ is non-decreasing, we find
\[ \int_e^x  t^{\lambda-1} f(t) \, dt \leq \int_e^y \frac{t^\lambda}{L(t)^{1-\varepsilon}} \,dt + \int_y^x t^{\lambda-1} f(x)\, dt = \frac{1+o(1)}{\lambda+1}\frac{y^{\lambda+1}}{L(y)^{1-\varepsilon}}+f(x)\,\frac{x^\lambda-y^\lambda}{\lambda}
.\]

We next make the choice $y=x\,L(x)^{-3\varepsilon}$ to obtain that
\begin{align*} x^\lambda f(x) - \lambda \int_e^x t^{\lambda-1} f(t) \, dt &\ge  f(x) y^\lambda-\frac{(1+o(1))\lambda}{\lambda+1}\frac{y^{\lambda+1}}{L(y)^{1-\varepsilon}}
\\ &\ge \frac{x}{L(x)^{1+\varepsilon}} \left ( \frac{x}{L(x)^{3\varepsilon}}\right )^\lambda- \frac{(1+o(1))\lambda}{\lambda+1}\frac{(x/L(x)^{3\varepsilon})^{\lambda+1}}{L(x)^{1-\varepsilon+o(1)}}
\\ & = x^{\lambda+1} \left ( \frac{1}{L(x)^{1+\varepsilon+3\lambda\varepsilon}} - \frac{1}{L(x)^{1-\varepsilon+o(1)+3\varepsilon(\lambda+1)}}   \right )
\end{align*} 
(since $L(y)=L(x)^{1-o(1)}$). The exponent $1+\varepsilon+3\lambda\varepsilon$ is less than the exponent $1-\varepsilon+o(1)+3\varepsilon(\lambda+1)$ for $x$ large enough, and as a consequence \[ x^\lambda f(x) - \lambda \int_e^x t^{\lambda-1} f(t) \, dt \ge \frac{x^{\lambda+1}}{L(x)^{1+(3\lambda+1)\varepsilon}}. \]
Since $\varepsilon$ can be taken to be arbitrarily small, we recover the claimed estimate for $x \to +\infty$.

\end{proof}

Assume for the moment that \[ \#\mathcal N_{u}(t)\ge \frac{t}{L(t)^{1+o(1)}}  \] for large enough values of $t$, so that $\#\mathcal N_{u}(t)= t/L(t)^{1+o(1)} $ as well by  Conjecture~\ref{conjgp}. Then we may apply Lemma \ref{calculus} with $f(t)=\#\mathcal N_{u}(t)$ to conclude that 
\[ \sum_{n\le x}\,g(n)^{\lambda} 
\ge x^\lambda \#\mathcal{N}_u(x) - \int_1^x \lambda t^{\lambda-1} \#\mathcal{N}_u(t) \, dt
 \ge  \frac{x^{\lambda+1}}{L(x)^{1+o(1)}}, \]
which is the sought lower bound, i.e. the lower bound half of Theorem \ref{maincond}.

\bigskip

We now adapt the proof of \cite[Th. 1.3]{ALPS12} to show our claim that $\#\mathcal{N}_u(x)$ is large. For any real number $y\ge1$ let
\[
M_y\colonequals \lfloor y\log^{3} y\rfloor !.
\]
We construct a set $\mathcal{L}$ of integers whose cardinality serves as a lower bound for the one of $\mathcal{N}_u$, with the following rule: a positive integer $n$ is in $ \mathcal{L}$ if it is of the form \[n = 2sM_y\] for some $y\ge3$ and for some odd
squarefree positive integer $s$ such that: 
\begin{itemize}
\item $\gcd(s,M_ya_{2})\allowbreak=1$, and 
\item for every prime $p\mid s$ we have $p-(\Delta\, /\, p)\mid M_y$. 
\end{itemize}

We now show that $\mathcal{L}\subseteq\mathcal{N}_u$
for any Lucas sequence $(u_n)_{n \in \mathbb N}$ with $a_2=\pm 1$. Such a restriction ensures that $z(m)$ is defined for all positive integers $m$.
It suffices to show: for any
$n=2sM_y\in\mathcal{L}$ and for any prime power
$q\mid n$, we have $z(q) \mid n$.
This is easy for $q\mid s$, since then $q=p$ is an odd prime and either
$z(p)=p$ (in case $p\mid\Delta$) or
$z(p)\mid p-(\Delta\, /\, p)$ (otherwise).  And since $p-(\Delta\, /\, p)\mid M_y$, we have
$z(p)\mid n$ in either case.

If on the other hand $q \mid 2M_y$, we have two cases depending on the parity of $q$.
\begin{itemize}
    \item When $q$ is odd, we have that $q=p^{k}$ with $p$ prime such that $p\le y(\log y)^{3}$. We have that $z(q)\mid (p-1)p^{k-1}, \,p^{k},\text{ or }(p+1)p^{k-1}$. If $p+1\le \lfloor y\log^{3} y\rfloor$, then $z(q)\mid M_y$ as $p^{k-1}\mid p^{k}\mid M_y$. The remaining case is when $p+1>\lfloor y\log^3 y\rfloor $ and $k=1$ and $z(q)=p+1$. Writing $p+1=2^{j}m$ where $m$ is odd, we can see that $2^j\mid 2M_y$ and $m\mid M_y$. Thus, in all cases, $z(q)\mid 2M_y$.
    \item When $q=2^{k}$, we know that $z(q)\mid 2^{k}\text{ or }3\cdot2^{k-1}$. Since $y\ge 3$, we get that $z(q)\mid 2M_y$.
\end{itemize}

\medskip

We now use the method of Erdős~\cite{Er35} to show that the set 
$\mathcal{L}(x):=\mathcal{L}\cap[1,x]$ is rather large. For this we take 
$$
y := \frac{\log x}{(\log_{2} x)^{6}} \text{ and } z :=\exp((\log_{2}x)^{2}-\log_{2}x\log_{4}x).
$$
Let $v\ge 2$.
Define $\mathcal{P}$ as the set of primes  $p$ such that:
\begin{itemize}
\item  $p \in [y(\log y)^3, z]$;
\item  $p-(\Delta/p)$ is $y$-smooth.
\end{itemize}

Thus, by (\ref{eq:komconjecture}), we obtain
$$
\#\mathcal{P} \geq \Pi^{*}(z,y) +O( y\log^3 y)  \gg \frac{\Psi(z,y)}{\log z}+O( y(\log y)^3).
$$
Now applying standard estimates for $\Psi(X,Y)$ \cite[Th. 2.1]{Po89}, we further find that, with $U:=\log z/\log y$,
\[\#\mathcal{P} \gg \frac{\Psi(z,y)}{\log z} \gg \frac{z\exp(-(1+o(1))U\log U)}{\log z}=z\exp(-(1+o(1))\log_{2}x\log_{3}x).\]

Next, we show that for any squarefree positive integer $s$ 
composed out of primes $p \in \mathcal{P}$, the integer $n = 2sM_y$ belongs to $ \mathcal{L}$. Clearly, $\gcd(s,M_ya_2)=1$. Let $p\mid s$. Then we need to show that $p-\left(\Delta/p\right)\mid M_y$. Let $q^{k}$ be a prime power dividing $p-\left(\Delta/p\right)$. Then $2 \le q\le y$ and \[\nu_{q}(p-\left(\Delta/p\right))\le \log (z+1)/\log q\le (\log_2 x)^
{2}/ \log q< 2(\log_{2} x)^2.\] At the same time, \[\nu_q(M_y)\ge \frac{y\,(\log y)^3}{2q}\ge \frac{1}{2} (\log_2 x-6\log_3 x)^3\ge \nu_{q}(p-\left(\Delta/p\right)) ,\] for large enough $x$. This proves our claim.

Consider the set $\mathcal{L}_r(x)$ of all such integers $n = 2sM_y$,
where  $s$ is composed out of 
$$
r :=\left\lfloor{\frac{\log x}{(\log_{2} x)^{2}}}\right\rfloor
$$ 
distinct primes $p\in \mathcal{P}$. 
By the trivial estimate $n!\le n^{n}$, we have 
$M_y \le \exp(\log x/(\log_{2} x)^2)$ for large $y$; moreover $s \le z^r$. This implies that  
for sufficiently large $x$ we have $n \le x/L(x)^{(1+o(1))\log_{4}x/\log_{3}x}$ for 
every $n \in \mathcal{L}_r(x)$. 

As for the cardinality of $\mathcal{L}_r(x)$, note that $(\#\mathcal{P}-i)/(r-i) \ge \#\mathcal{P}/r$ for $1 \leq i <r \leq \#\mathcal{P}$ and multiply this over $1 \leq i \leq r-1$ to get
$$\# \mathcal{L}_r(x) \ge \binom{\#\mathcal{P}}{r} \ge  \left(\frac{\#\mathcal{P}}{r}\right)^r. $$
Using the estimate for $\#\mathcal{P}$ and $r$:
\[ \frac{\#\mathcal{P}}{r} \gg \frac{z \exp(-(1+o(1))\log_2 x \log_3 x)}{(\log x)/(\log_2 x)^2} \]
\[ = \frac{\exp((\log_2 x)^2 - (1+o(1))\log_2 x \log_3 x)}{(\log x)/(\log_2 x)^2}. \]
This leads to the bound $\#\mathcal{L}_r(x)\ge  {x}/{L(x)^{1+o(1)}}$.
 Noting that $\mathcal{L}_r(x)\subseteq\mathcal{L}(x)\subseteq \mathcal{N}_u(x)$, our proof is complete.

\begin{remark}
    One can reproduce the arguments in \cite[\S 5]{Wr20} to get a smooth set of shifted primes as in $\mathcal{P}$ above under the Heath-Brown conjecture on the least prime in an arithmetic progression. However such an argument would only lead us to the bound \[\#\mathcal{N}_u(x)\gg \frac{x}{L(x)^{2+o(1)}}.\] 
\end{remark}

\section{The unconditional lower bound}
In this section, we obtain an unconditional lower bound for the case where $a_2=\pm 1$. The proof begins similarly to the previous section and the only difference is in the process  we employ  to pick a  smooth set of shifted primes. Our choice of these sets of primes is based on a slight modification of a recent result of Lichtman on smooth shifted primes.
\begin{theorem}[Smooth shifted primes in APs with fixed moduli]\label{thm:shftdprime}
For fixed nonzero $a\in\mathbb{Z}$ and $\beta > 15/32\sqrt{e}= 0.284\ldots$, there exists $C\ge1$ such that
\begin{align}
\sum_{\substack{x<p\le 2x\\P(p-1) \le x^\beta \\ a\mid p-1}}1 \ \gg \ \frac{x}{(\log x)^C}.
\end{align}
\end{theorem}
\begin{proof}
The proof only requires a slight modification in the construction of suitable moduli as in  \cite[\S 3]{Li22}, whose notation we now borrow, and the rest of the argument remains the same.
Let $\theta=17/32-\varepsilon$ and take $\beta>(1-\theta)/\sqrt{e}$. Define the integer $H = \lceil (2\theta-1)/\varepsilon\rceil$; in particular $\frac{2\theta-1}{H}\in[\varepsilon/2,\varepsilon]$. Define the subset $L = \{l\sim x^{\frac{2\theta-1}{H}}\}$ used in dyadic decomposition, and define the sequence $\mathcal G = \{al_1\cdots l_H : l_i\in L\}$, so that $l\asymp x^{2\theta-1}$ for all $l\in \mathcal G$, and
\[
|\mathcal G|  = |L|^H \gg x^{2\theta-1}
\]

Consider
\begin{align*}
\mathcal N \ & = \ \big\{(p,l,m,n) \;:\; p-1 = lmn, \ l\in\mathcal G, \, m,n\sim x^{1-\theta},\,\, p\sim x \big\}\\
\mathcal N' \ & = \ \big\{(p,l,m,n)\in \mathcal N \;:\; P^+(p-1) \le x^\beta \big\}
\end{align*}

Note that we have incorporated a constant factor $a$ in the definition of $\mathcal{G}$. This ensures that the primes considered in $\mathcal{N}$ satisfy the congruence $p\equiv 1\bmod{a}$ without altering the properties of $\mathcal{G}$ as defined in \cite[\S 3]{Li22}.
By proceeding exactly as in \cite[\S 3]{Li22}, we now obtain that
\[\label{eq:N1bound}
\#\mathcal N' \gg \frac{x}{\log x}.
\]
and then deduce the stated bound. \end{proof}
Once we have this theorem, we can run the same argument as in the beginning of the previous section, while exploiting the theorem that we just proved. There are two main differences: the choice of primes dividing $s$ and the quantity $M_y$.

Let first $v\in \left(1, 3.517\right]$. We set 
\[ y := \frac{\log x}{\log_{2} x} \text{ and } z := y^v,\]
\[M_y:= \mathrm{lcm}\left\{m: m\le y\right\}.\]

Let us define $\mathcal{P}$ as the set of primes  $p$ such that:
\begin{itemize}
\item  $p \in \left(z, 2z\right)$;
\item  $p\equiv 1\pmod{8|\Delta|}$ so that $(\Delta/p)=1$;
\item  $p-1$ is $y$-smooth;
\item  $p-1$ is not divisible by any 
proper prime power $q > y$. 
\end{itemize}

For any proper prime power $q$, there are at most $O(z/q)$ primes $p \le 2z$ such that $q\mid p-1$. Moreover, it is easy to see that there are only  $O(
t^{1/2})$ proper prime powers $q\le t$. By partial summation, 
it follows that the number of primes not satisfying the last condition is at most $O(z/y^{1/2})$.

Thus, by Theorem~ \ref{thm:shftdprime} applied with $a=8|\Delta|$ and $x=z$, we obtain
$$
\#\mathcal{P} \gg \frac{z}{(\log z)^{C}} + O(zy^{-1/2}) \ge  \frac{C_0z}{(\log z)^{C}},
$$
for some constants $C \ge 1$ and $C_0>0$, provided that $x\ge x_0$ for a sufficiently large constant $x_0$. 

As in the previous section, consider the set $\mathcal{L}_v(x)$ of all integers \[n = 2sM_y,\]
where  $s$ is composed out of 
$$
r :=\left\lfloor{\frac{\log x}{v\log_{2} x}}\right\rfloor
$$ 
distinct primes $p\in \mathcal{P}$. 
Since by the prime number theorem the estimate 
$M_y =\exp((1+o(1)) y)$ holds as $x\to+\infty$, and moreover $ s < (2z)^r$,
we see that for sufficiently large $x$ we have $n \le x$ for 
every $n \in \mathcal{L}_v(x)$. 

For the cardinality of $\mathcal{L}_v(x)$ we have
$$\# \mathcal{L}_v(x) \ge \binom{\#\mathcal{P}}{r} \ge  \left(\frac{\#\mathcal{P}}{r}\right)^r . $$

Since 
\[
r \ge \frac{\log x}{v\log_{2} x}-1
\quad\text{ and }\quad
\frac{\#\mathcal{P}}{r} \ge \frac{C_1(\log x)^{v-1}}{(\log_{2} x)^{C+v-1}}
\]
for some constant $C_1>0$, we obtain \[\# \mathcal{L}_v(x) \ge \frac{x^{1-1/v}}{L(x)^{1+(C-1)/v+o(1)}}\ge x^{1-1/v}/L(x)^{C+1},\] for large enough $x$. 

On the other hand, for each element $n=2sM_y$ in $\mathcal{L}_v(x)$, we have $M_y=\exp((1+o(1))y)$ as well as \[s \ge z^r  \ge \left( {y^v} \right )^{\log x/v\log_2 x-1}= x\, L(x)^{-1+o(1)},\]
which taken together imply that $n \ge x\, L(x)^{-1+o(1)}$. 

Now, we have that $\mathcal{L}_v(x)\subseteq \mathcal{N}_{u}(x)$. The proof is the same as in the previous section, except that we need to show that if a prime power $q\mid 2M
_y$, then $z(q)\mid 2M_y$: but the proof of this is identical to the one in \cite[p. 283]{ALPS12}.

To show the unconditional lower bound on moments of $g(n)$, we then argue as follows:
\[\sum_{n\le x}\,g(n)^{\lambda} \geq \sum_{\substack{ n\le x 
\\ n\in \mathcal{L}_v(x)}}\, n^{\lambda} \ge \left( \frac{x}{L(x)^{1+o(1)}}  \right)^\lambda \, \frac{x^{1-1/v}}{L(x)^{C+1}} = x^{\lambda+1-1/v+o(1)},\] as each element in $\mathcal{L}_{v}(x)$ is at least $x\, L(x)^{-1+o(1)}$. By plugging $v=3.517$ in the argument above, our proof of Theorem \ref{mainuncond} is complete, since $1-1/3.517+o(1)>0.715$ for $x \to +\infty$.

\medskip

As a byproduct, we also find the following unconditional estimate, which improves on \cite[Th. 1.3]{ALPS12}

\begin{corollary}\label{Nu}
  As $x \to +\infty$, we have \[\#\mathcal{N}_u(x)\gg x^{0.715}.\]
\end{corollary}

\medskip

\begin{remark}\label{a2}
One could consider the distribution of $\mathcal{N}_{u}$ in the case $a_{2}\neq \pm 1$. We believe that in this case the count $\#\mathcal{N}_u(x)\ll x^{1-\varepsilon}$ holds for some $\varepsilon>0$. \\For an integer $n$ in $\mathcal{N}_u$, the orbit $\{z(n),z(z(n)), \ldots\}$ of a discrete dynamical system is well defined. In particular, $a_{2}\nmid z^{\circ i}(n)$ for any $i\ge 0$. If we consider the simplest case when $u_{n}=c^{n}-1$ for $c\geq 2$, then $z(n)$ is the multiplicative order of $c$ modulo $n$. By a result of Ford \cite[Cor. 1]{Fo14}, we know that for some prime $r\ge 3$ the cardinality of set of positive integers less than $x$ such that $r\nmid \varphi^{\circ k}(n)$, for all iterates of the Euler totient function, is bounded by $x^s$ for some constant $s<1$. Based on his arguments, one can see that a similar result for iterates of the Carmichael lambda function $\lambda(n)$, which behaves similarly as $z(n)$, holds. We would expect similar result to be true for iterates of $z(n)$ as well, and thus the expected bound on $\mathcal{N}_u$.
\end{remark}

\section{Appendix: A bound for large GCDs}
In this appendix, we will prove an analogue of a bound of Pollack for $\gcd(n,z(n))$; the proof follows the general strategy of \cite[Th. 1.1]{Po11}. Such investigations of greatest common divisors involving integers and their valuations at arithmetic functions are an active area of research \cite{Yu22,Po22}. Our goal is to obtain explicit estimates, but we do not need the best ones.

For any real number $x\ge 1$, put 
\[G(x) \,:=\, \#\{n \le x : \,\gcd(n,a_{2})=1\text{ and } \gcd(n,z(n))>L(x)^{12}\}.\]
\begin{theorem}\label{paul}
    We have that $G(x) \le x/L(x)^{1+o(1)}$.
\end{theorem}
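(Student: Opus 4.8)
The plan is to count the integers $n\le x$ with $\gcd(n,z(n))>L(x)^{12}$ by bucketing according to the value $d:=\gcd(n,z(n))$ and to a prime power $q\| d$ chosen so that $q$ is not too small. First I would observe that if $d=\gcd(n,z(n))>L(x)^{12}$, then $d$ has a prime-power divisor $q=p^a$ with $q>L(x)^{12/\omega(n)}\ge L(x)^{12\log_2 x/\log x}$, say; since $\omega(n)\ll \log x/\log_2 x$ for all $n\le x$ (a cruder bound suffices), we may assume $q$ exceeds a fixed small power of $L(x)$, and in fact by absorbing a $\tau(n)\ll x^{o(1)}$ factor we may simply fix one such prime power $q\mid n$ with $q\mid z(n)$ and $q$ large. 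The key structural input is Lemma~\ref{lem:basic}: $q=p^a\mid z(n)$ forces, via \ref{item2b} and \ref{item4b}, that $z(p^a)\mid z(n)$, hence $p^{e}z(p)\mid z(n)$ for the appropriate exponent; more usefully, writing $n=qr$, the condition $q\mid z(n)$ together with the divisibility properties of $z$ means that $p$ is forced to divide $z(r')$ for a divisor $r'$ of $n$, which heuristically happens with ``probability'' about $1/p$ for each $r$.

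The next step is to turn this into an actual count. I would fix $p$ and $a$ with $q=p^a>L(x)^{c}$, and count $n\le x$ that are divisible by $q$ and such that $q\mid z(n)$. Writing $n=qm$ with $m\le x/q$, the second condition constrains $m$: by \ref{item1b}--\ref{item4b}, $q\mid z(n)=z(qm)$ imposes that $p\mid z(n)$, and since $z(n)=\lcm$ of the $z$ over prime-power divisors of $n$, there must be a prime $\ell\mid n$ (possibly $\ell=p$ itself) with $p\mid z(\ell^{v_\ell(n)})$, i.e. $p\mid z(\ell)$ for some $\ell\mid n$ with $\ell\ne p$, or $p^{a+1}\mid n$. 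The first alternative says $\ell\equiv \pm 1\pmod{p}$ roughly (more precisely $z(\ell)\mid \ell\mp 1$ by \ref{item3b}, so $p\mid \ell\mp(\Delta/\ell)$), so $n$ is divisible by $q$ and by a prime $\ell$ in one of two residue classes mod $p$; the number of such $n\le x$ is $\ll (x/q)\cdot \sum_{\ell\le x,\ \ell\equiv\pm1(p)}1/\ell \ll (x/q)\cdot \log\log x / \varphi(p)\cdot$, and after summing over the relevant range of $p$ this should give the bound $x/L(x)^{1+o(1)}$. I would handle the ``$p^{a+1}\mid n$'' alternative trivially: those $n$ number at most $x/p^{a+1}\le x/(pL(x)^{c})$, negligible after summing.

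Actually, a cleaner route — and the one I would ultimately follow — is to essentially reduce to Lemma~\ref{gp} / the Gordon--Pomerance circle of ideas directly. The point is that $\gcd(n,z(n))$ being large is closely tied to $z(n)$ being small relative to $n$ (since $z(n)\le \ell(n)$ and the structure of $\ell$), and the set $\mathcal B_z$ of $n$ with $z(n)=z$ for a fixed small $z$ has $\#\mathcal B_z(t)\le t/L(t)^{1/2+o(1)}$; but here we want the stronger ``$1+o(1)$''. I would instead argue: if $\gcd(n,z(n))=d$ is large, then $n/d$ divides $z(n)/d$-free part, and crucially $z(n)\mid \ell(n)$ with $\ell(n)$ being the lcm of $n$ with stuff, so $d\mid n$ and $d\mid z(n)$ together with $z(n)\le \ell(n)\le n\,z(\mathrm{rad}(n))$ pin down $n$ up to the kind of count in \cite[Th. 1.1]{Po11}. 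The main obstacle, and the place requiring the most care, is getting the exponent $1+o(1)$ rather than merely $1/2+o(1)$: this requires exploiting that we are counting $n$ with $\gcd(n,z(n))>L(x)^{12}$ — a \emph{twelfth} power margin — which gives enough room to run Pollack's argument (the extra $12$ is there precisely to convert losses in the divisor/prime-power bookkeeping and in the transition $L(t)\rightsquigarrow L(x)$ into the final clean $1+o(1)$). I expect the verification that all the error terms from summing over $p$, $a$, and the residue classes stay below $x/L(x)^{1+o(1)}$ to be the most delicate bookkeeping, but conceptually straightforward given Lemma~\ref{lem:basic} and the method of \cite[Th. 1.1]{Po11}.
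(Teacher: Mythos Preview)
Your first approach has a real gap. If $d=\gcd(n,z(n))>L(x)^{12}$, the largest prime power $q\mid d$ is only guaranteed to exceed $d^{1/\omega(d)}$, and since $\omega(d)$ can be as large as $\asymp \log x/\log_2 x$ this gives merely $q>(\log_2 x)^{O(1)}$. With $q$ that small, your count ``$n=qm$ with some prime $\ell\mid m$ satisfying $p\mid z(\ell)$'' yields at best $\ll x/(qp)$ per $(p,a)$, and summing over all $p^a>(\log_2 x)^{O(1)}$ does not come close to $x/L(x)^{1+o(1)}$. Absorbing a $\tau(n)$ factor lets you union-bound over choices of $q$, but it cannot make $q$ larger; the obstruction is precisely the $n$ for which $\gcd(n,z(n))$ is a product of many small primes.

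The paper handles exactly this obstruction, and your second paragraph gestures at the right reference but misses the two ingredients that make it work. First, one needs an analogue of Pollack's structural lemma: any squarefree $m$ has a divisor $d\gg m^{1/2}$ with $\gcd(d,z(d))=1$ (Lemma~\ref{lemma 2.2}). This lets one locate, inside $\gcd(n,z(n))$, a divisor $d\in(L(x)^2,L(x)^8]$ with $\gcd(d,z(d))=1$, whence $d\mid z(n/d)$. Second, one needs a count of squarefree $e\le x$ with $d\mid z(e)$ that tracks $\omega(d)$ (Lemma~\ref{lemma 2.4}, giving $\ll (x/d)\cdot(C\omega(d)\log_2^2 x)^{\omega(d)}/d$), followed by a dichotomy on $\omega(d)$ against the threshold $A(x)=\log x\,\log_3 x/(\log_2 x)^2$: for $\omega(d)<A(x)$ the lemma plus $d>L(x)^2$ suffices, while $\omega(d)\ge A(x)$ is disposed of by Hardy--Ramanujan. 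Finally the squarefree restriction is removed via the squarefree/squarefull factorisation $n=n_0n_1$. None of these steps is visible in your outline, and without Lemma~\ref{lemma 2.2} there is no mechanism to upgrade from ``$\gcd(n,z(n))$ large'' to ``a single controllable divisor $d$ with $d\mid z(n/d)$'', which is what drives the exponent $1+o(1)$ rather than $1/2+o(1)$.
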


We first need a variant of \cite[Lemma 2.2]{Po11} for our purposes.
\begin{lemma}\label{lemma 2.2}
If $m > m_0$ is squarefree, then there is some divisor $d$ of $m$ with $\gcd(d, z(d)) \allowbreak = 1$ and $d \ge C_1m^{1/2}$ for some constant $C_1>0$.
\end{lemma}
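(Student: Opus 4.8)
The plan is to take $d$ as the product of a carefully chosen subset of the prime divisors of $m$, selected by a greedy rule that forces $z$-coprimality, and then to control the discarded primes sharply enough that $d$ stays above $m^{1/2}$. Here and below we assume $\gcd(m,a_2)=1$, as is needed for $z$ to make sense on divisors of $m$. First I would reduce to $m$ odd and coprime to $\Delta$: the primes dividing both $m$ and $2\Delta$ contribute a factor at most $2|\Delta|=O(1)$, so it suffices to prove the claim for $m_1:=m\big/\prod_{p\mid m,\ p\mid 2\Delta}p$, and choosing $m_0\ge 2|\Delta|$ guarantees $m_1>1$. Every prime $q\mid m_1$ then satisfies $q\ge 3$ and $q\nmid\Delta$, so by Lemma~\ref{lem:basic}\ref{item3b} we have $z(q)\mid q-(\Delta/q)\in\{q-1,q+1\}$; in particular $q\nmid z(q)$.

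Next, the greedy construction. List the prime divisors of $m_1$ as $q_1>q_2>\cdots>q_k$, mark them all active, and repeat: take the largest active prime $q$, put it into a set $S$, and deactivate $q$ together with every \emph{active} prime dividing $z(q)$. This terminates, and the primes of $m_1$ are partitioned as $\bigsqcup_{q\in S}\bigl(\{q\}\cup E_q\bigr)$, where $E_q$ is the set of primes deactivated at the step of $q$; set $d:=\prod_{q\in S}q$, a squarefree divisor of $m$. To see $\gcd(d,z(d))=1$, suppose a prime $r$ divides both; since $d$ is squarefree, $z(d)=\lcm_{q\in S}z(q)$, so $r=q_0$ for some $q_0\in S$ and $q_0\mid z(q)$ for some $q\in S$. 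If $q=q_0$ this contradicts $q_0\nmid z(q_0)$; if $q<q_0$ then $q_0\mid z(q)\le q+1$ forces $q_0\le q+1$, impossible for distinct odd primes; and if $q>q_0$ then $q$ was processed before $q_0$, at which point $q_0$ was still active with $q_0\mid z(q)$, so $q_0\in E_q$, contradicting $q_0\in S$.

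It remains to bound $d$ from below. Because $m_1$ is odd, every prime in $E_q$ is odd and divides $z(q)$, so $\prod_{p\in E_q}p$ divides the odd part of $z(q)$. As $q$ is an odd prime with $z(q)\mid q\pm1$, the integer $z(q)$ divides an even number at most $q+1$, so its odd part is at most $(q+1)/2<q$. Hence $q\cdot\prod_{p\in E_q}p<q^2$ for every $q\in S$, and multiplying over the partition,
\[
m_1=\prod_{q\in S}\Bigl(q\prod_{p\in E_q}p\Bigr)<\prod_{q\in S}q^2=d^2,
\]
so $d>m_1^{1/2}\ge\bigl(m/(2|\Delta|)\bigr)^{1/2}\gg m^{1/2}$, as desired.

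The only real subtlety is securing the \emph{clean} exponent $1/2$. The crude bound $\prod_{p\in E_q}p\le z(q)\le q+1$ only yields $m_1\le d^2\prod_{q\in S}(1+1/q)\ll d^2\log m$, i.e.\ $d\gg m^{1/2}/\sqrt{\log m}$; this is exactly where our setting diverges from Pollack's, in which $\varphi(p)=p-1<p$ outright. The fix, as above, is to peel off the prime $2$ from $m$ first and then exploit that $z(q)\mid q\pm1$ is even for odd $q$, so its odd part drops strictly below $q$; one should also check that the $O(1)$ factor absorbed in the reduction is harmless, which it is because $\Delta$ (and $a_2$) are fixed.
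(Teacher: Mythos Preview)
Your proof is correct and follows essentially the same greedy construction as the paper: select primes of $m$ in decreasing order, skipping those that divide the $z$-value of an already-chosen prime, and take $d$ to be their product. The only cosmetic difference is in the size estimate: the paper observes that the complementary factor $m/d$ divides $z(d)$ and invokes Sall\'{e}'s bound $z(d)\le 2d$ to get $m\le 2d^2$, whereas you argue locally via the odd part of $z(q)\mid q\pm1$ to reach $m_1<d^2$; both yield $d\gg m^{1/2}$.
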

\begin{proof}
By replacing $m$ by $m/\gcd(m,2\Delta a_2)$ if necessary, we assume that $\gcd(m,2\Delta a_2)\allowbreak =1$.
Let $m=p_1\,p_2\cdots p_k$ such that $p_k$ is the largest prime divisor of $m$.
Then we define $m_1=q_1\,q_2\cdots q_r$ where $q_1=p_k$ and $q_i$ is the largest prime divisor of $m$ which does not divide $z(q_1)\,z(q_2)\cdots z(q_{i-1})$ from the set of prime factors of $m$ not yet included in $m_1$. By construction, $q_1 > q_2 > \dots > q_r$.
The process of constructing $m_1$ stops when any remaining prime factor $p'$ of $m/m_1$ satisfies $p' \mid z(m_1)$. If there were a prime $p''$ in $m/m_1$ such that $p'' \nmid z(m_1)$, then $p''$ would have been included as some $q_{r+1}$ in $m_1$. Let $m_2 = m/m_1$. Thus, $m_2$ has the property that all its prime factors divide $z(m_1)$. Since $m_2$ is squarefree (as $m$ is), this implies $m_2 \mid z(m_1)$.
We have $\gcd(m_1,z(m_1))=1$: by construction, no $q_i$ divides $z(q_j)$ for $j<i$. If $q_i \mid z(q_j)$ for $i<j$, then $q_j>q_i$ which is a contradiction as per our construction. 
Recalling the upper bound $z(n) \leq 2n$ \cite{Sa75}, we find that
 \[2 m_1 \geq z(m_1) \geq m_2,\quad\text{whence}\quad 2 m_1^2 \geq m_1 m_2 = m.\]
Hence $m_1 \ge m^{1/2}/\sqrt{2}$. So if we choose $d = m_1$ and $C_1=1/2\sqrt{|\Delta a_2|}$,
the lemma is proved.
\end{proof}

The next lemma is an easy consequence of the Brun–Titchmarsh inequality; for a proof
see, e.g., \cite[Lemma 6]{KAct}.

\begin{lemma}
    Let $m$ be a positive integer. For all $x\ge 1$, we have \[\sum_{\substack{ p\le x \\ p\equiv \pm 1\bmod{m}}}\frac1{p}\ll \frac{\log_2 x}{\varphi(m)}.\] Here the implied constant is absolute.
\end{lemma}
Next, we need a variant of \cite[Lemma 7]{Po192} (also see \cite[Lemma 2.4]{Po11}). The proof runs exactly as in \cite{Po11} but we include it here for completeness.

\begin{lemma}\label{lemma 2.4}
    Let $d$ be a squarefree integer. Then the number of squarefree $n\le x$ such that $\gcd(n,a_2)=1$ and $d\mid z(n)$ is at most \[\omega(d)^{\omega(d)} \,\frac{x}{d} (C (\log_{2}x)^2)^{\omega(d)}\] for some constant $C>0$.
\end{lemma}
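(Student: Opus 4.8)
The plan is to reduce the claim to a union bound over the possible prime factorizations of $z(n)$ restricted to the prime factors of $d$. Write $d = r_1 r_2 \cdots r_k$ with $k = \omega(d)$ distinct primes. If $d \mid z(n)$, then by Lemma~\ref{lem:basic}\ref{item4b} each prime $r_i$ divides $z(n) = \operatorname{lcm}_{p^a \| n} z(p^a)$, and since $n$ is squarefree this is $\operatorname{lcm}_{p \mid n} z(p)$; hence for each $i$ there exists a prime $p_i \mid n$ with $r_i \mid z(p_i)$. The map $i \mapsto p_i$ need not be injective, but we can first choose a function $f\colon \{1,\dots,k\} \to \{1,\dots,k\}$ recording which of the (at most $k$) distinct primes $p_i$ each $r_i$ is assigned to — there are at most $k^k = \omega(d)^{\omega(d)}$ such functions — and then sum over the fibers. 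This is where the $\omega(d)^{\omega(d)}$ factor comes from.

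Having fixed $f$, I would bound the number of squarefree $n \le x$ admitting such an assignment. Grouping the $r_i$ by the value of $f$, we are requiring that $n$ be divisible by distinct primes $q_1, \dots, q_j$ (the images of $f$, with $j \le k$) such that $q_t$ is a prime with $\bigl(\prod_{f(i) = t} r_i\bigr) \mid z(q_t)$. The key sub-estimate is then: for a fixed squarefree integer $e$, the number of primes $q \le x$ with $e \mid z(q)$ is $\ll (\log_2^2 x)^{\omega(e)} \, x / (e \log x)$ or so. This follows because $z(q) \mid q \mp 1$ by Lemma~\ref{lem:basic}\ref{item3b}, so $e \mid z(q)$ forces $q \equiv \pm 1 \pmod{e'}$ for the $2\Delta a_2$-free part $e'$ of $e$; counting primes in these residue classes (trivially, via Brun–Titchmarsh, or the cited \cite[Lemma 7]{Po192}/\cite[Lemma 2.4]{Po11}) gives a bound of the stated shape, the $(\log_2 x)$-type losses absorbing the divisor-function factors $\tau(e) \ll$ coming from the number of $e$ whose radical divides a given modulus and from sieve constants.

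To assemble the final count: for each of the $\le \omega(d)^\omega(d)$ functions $f$, sum over choices of the primes $q_1, \dots, q_j$ and then over the remaining part $n / (q_1 \cdots q_j)$ which ranges over squarefree integers $\le x/(q_1 \cdots q_j)$. Each prime $q_t$ contributes a factor $\ll (C' \log_2^2 x)^{\omega(e_t)} / e_t$ after summing over its allowed values (the harmonic-type sum over primes in an arithmetic progression converging like $\sum 1/q$ with the progression giving the $1/e_t$), and since $\prod_t e_t = d$ and $\sum_t \omega(e_t) = \omega(d)$, the product of these factors is $\ll (C \log_2^2 x)^{\omega(d)} / d$. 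Multiplying by the trivial bound $x$ for the cofactor and by the number of functions $f$ yields the claimed $\omega(d)^{\omega(d)} (x/d)(C \log_2^2 x)^{\omega(d)}$.

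The main obstacle is bookkeeping the non-injectivity of $i \mapsto p_i$ cleanly — one must be careful that after grouping by $f$ the moduli $e_t$ really do multiply to $d$ and that the exponents $\omega(e_t)$ really sum to $\omega(d)$, so that no extra factor is lost — and making sure the per-prime sum $\sum_{q} (\text{something})/q$ over primes $q \equiv \pm 1 \pmod{e_t'}$ is controlled uniformly in $e_t$ by the right power of $\log_2 x$; this is precisely the content of the cited lemmas of Pollack, so I would invoke \cite[Lemma 7]{Po192} (equivalently \cite[Lemma 2.4]{Po11}) in a form adapted, via $z(q) \mid q \mp 1$, from the totient setting to the Lucas setting, exactly as in Lemma~\ref{lemma 2.2} above.
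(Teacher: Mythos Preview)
Your proposal is correct and follows essentially the same route as the paper. The paper's own proof is terser: it introduces the modified Euler function $\varphi_u(n)=\prod_{p\mid n}\bigl(p-(-1)^{p-1}(\Delta/p)\bigr)$, notes that $z(n)\mid \varphi_u(n)$ for squarefree $n$, and then simply says that Pollack's argument in \cite[Lemma~7]{Po192} goes through verbatim with $\varphi_u$ in place of $\varphi$; your write-up unpacks that very argument (the $k^k$ union bound over assignments $i\mapsto p_i$, the reduction via $z(q)\mid q\pm 1$ to primes in a bounded number of residue classes mod $e_t$, and the harmonic sum $\sum_{q\equiv\pm 1\ (e_t)}1/q\ll (\log_2 x)^2/e_t$). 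One small imprecision: each block actually contributes a factor $\ll C'(\log_2 x)^2/e_t$, not $(C'\log_2^2 x)^{\omega(e_t)}/e_t$, but since there are at most $k=\omega(d)$ blocks the final bound is the same; and your closing reference to ``Lemma~\ref{lemma 2.2}'' is a slip, as that lemma concerns a different statement.
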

\begin{proof}
    Let \[\varphi^{*}(n)=\prod_{p\mid n} \left(p - (-1)^{p-1}\eta(p)\right)\] be a modified Euler function, where $\eta(p)=(\Delta/p)$ when $p\neq 2$ and $\eta(2)= \mathds{1}_{2\nmid \Delta}$. Clearly, $z(n) \mid \varphi^{*}(n)$ for all squarefree $n$. 
    
Since $d$ divides $\varphi^{*}(n)$, we can write $d = \prod_{p \mid n} a_p$, where each $a_p$ divides $p - (-1)^{p-1}\eta(p)$. Discarding those $a_p = 1$, we see that $n$ induces a (not necessarily unique) factorization of $d$, whereby by a \emph{factorization} of $d$ we mean a decomposition of $d$ as a product of factors strictly larger than $1$, where the order of the factors is not taken into account. For each possible factorization of $d$, we estimate the number of $n \leq x$ as in the lemma statement which induce this factorization.

Let $d = a_1 a_2 \cdots a_k$ be a factorization of $d$. If $n$ induces this factorization, then there are distinct primes $p_1, \ldots, p_k$ dividing $n$ with $p_i \equiv -1\text{ or }0\text{ or }1 \pmod{a_i}$ for each $1 \leq i \leq k$. So, the number of such $n \leq x$ is bounded above by 
\[x\prod_{i=1}^{k}\left(\sum_{\substack{p_i\le x\\ p_i\equiv -1,0,1\bmod{a_i}}}\frac1{p_i}\right)\]
For each $i$, the inner sum is bounded by $\ll \frac{\log_{2} x}{\varphi(a_i)}+\frac1{a_i}\ll \frac{(\log_{2} x)^{2}}{a_i}$. Thus, for an absolute constant $C>0$, the number of $n$ is at most 
$$
x \, \prod_{i=1}^k \frac{C (\log_2 x)^{2}}{a_i} 
= \frac{x}{d} (C (\log_2 x)^{2})^k 
\leq \frac{x}{d} (C (\log_2 x)^{2})^{\omega(d)}.$$ (The last inequality uses the observation that each factorization of $d$ involves at most $\omega(d)$
factors.) Since $d$ is squarefree, the number of factorizations of $d$ is given by $B_{\omega(d)}$
, where $B_l$
(the $l^{\text{th}}$ Bell number ) stands for the number of set-partitions of an $l$-element set.
Since any partition of an $l$-element set involves at most $l$ components, we always
have $B_l\le  l^{l}$. Taking $l = \omega(d)$ completes the proof.
\end{proof}

Let us now prove our theorem.
\begin{proof}
First, we prove a squarefree variant of Theorem \ref{paul}. Let \[G^{*}(x) := \#\{n \le x\text{ squarefree } :\, \gcd(n,a_{2})=1,\, \gcd(n,z(n))>L(x)^{8}\}.\]

We now show that that $G^{*}(x) \le x/L(x)^{1+o(1)}$. 
For each $n$ contributing to $G^\ast(x)$, if there is a prime $p>L(x)^{4}$ dividing $\gcd(n,z(n))$, then we may write $n=pr$ with $p\mid z(r)$. By Lemma \ref{lemma 2.4}, the number of possible values of $r\le x/p$ is at most $Cx (\log_{2} x)^{2} /p^{2}$. Summing over all primes $p>L(x)^{4}$, we get that the number of such $n$ is at most \[\frac{x (\log_{2} x)^{2}}{L(x)^{4}}.\]

We may therefore assume that the largest prime dividing $\gcd(n,z(n))$ is at most $L(x)^{4}$. Since $\gcd(n,z(n)) > L(x)^{8}$,
successively stripping primes from $\gcd(n,z(n))$, we must eventually discover a divisor $D_0$ of $n$ such that $D_0 \in \left(L(x)^{4}, L(x)^{8}\right]$. If $x$ (and hence $L(x)$) is large, we can apply Lemma \ref{lemma 2.2} to this divisor $D_0$ to obtain a divisor $d$ of $D_0$ (hence $d$ is also a divisor of $\gcd(n,z(n))$) with $d \in \left(C_1L(x)^{2}, L(x)^{8}\right]$ having the property that $\gcd(d, z(d)) = 1$.

Now observe that if $n=de$, then $\gcd(d,e)=1$ since $n$ is squarefree. Thus, $z(de)\mid z(d) z(e)$. As a result, we get that $d\mid z(e)$. By Lemma \ref{lemma 2.4}, the number of possible $e$ is at most \begin{equation}\label{eq}
 \quad \frac{x}{d^{2}} (C\omega(d) \log_{2}^{2} x)^{\omega(d)}.
\end{equation}

The remainder of the proof is a two-way strategy. Let $A(x)=\log x \log_{3} x/(\log_2 x)^{2}$. First, suppose that $\omega(d)<A(x)$. Then we can bound (\ref{eq}) by $xL(x)^{1+o(1)}/d^{2}$. Summing over these values of $d\in \left(C_1L(x)^{2}, L(x)^{8}\right)$, we get that the number of corresponding $n$ is at most \[x\,L(x)^{1+o(1)}\sum_{d>C_1L(x)^{2}}\frac1{d^{2}}\le \frac{x}{L(x)^{1+o(1)}}.\] The remaining $n$ have a divisor $d\in (C_1L(x)^{2},L(x)^{8}]$ with $\omega(d)>A(x)$, so that $\omega(n)>A(x)$. By a theorem of Hardy--Ramanujan \cite{HR17}, the number of $n\le x$ with $\omega(n)>A(x)$ is at most \[\frac{x}{\log x}\, \sum_{k>A(x)} \frac{(\log_2 x +O(1))^{k-1}}{(k-1)!}\le \frac{x}{L(x)^{1+o(1)}}.\] Combining the above estimates, we get that $G^{*}(x)\le x/L(x)^{1+o(1)}$.

\medskip

To transition from $G^{*}$ to $G$, let $n$ be any integer such that $\gcd(n,a_2)=1$ and $\gcd(n,z(n))>L(x)^{12}$, and write $n=n_{0}n_1$,where $n_0$ is squarefree, $n_1$ is squarefull, and $\gcd(n_0,n_1)=1$. Then, for $x$ large,
\begin{align*}
    L(x)^{12} < \gcd(n,z(n)) &\le \gcd(n_0,z(n_0))\,\gcd(n_0,z(n_1))\,\gcd(n_1,z(n_0)z(n_1)) \\&\le \gcd(n_0,z(n_0))\,z(n_1)\,n_{1}\le 2\gcd(n_0,z(n_0)) n_1^{2}.
\end{align*}
Thus, either (a) $n_1 > \frac{L(x)^{2}}{2}$ or (b) $n_1 < \frac{L(x)^{2}}{2}$ and $\gcd(n_0,z(n_0))>2L(x)^{8}$. The number of $n\le x$ in (a) is $O(x/L(x))$, while the number of $n\le x$ in (b) is at most \[\sum_{\substack{ n_1\le L(x)^{2} \\ n_{1}\text{ squarefull}}} G^{*}(x/n_1).\]
By our bound on $G^{*}(x)$ and noting that $L(x)^{1+o(1)}=L(x/n_1)$ for $n_1\le L(x)^{2}$, we get an upper bound \[\frac{x}{L(x)^{1+o(1)}}\sum_{n_{1}}\frac1{n_1}\le \frac{x}{L(x)^{1+o(1)}},\] where we used the fact that the sum of reciprocals of squarefull integers converges.
\end{proof}

\end{document}